\documentclass[11pt]{article}  
\usepackage[utf8]{inputenc}
\usepackage[pagebackref=true]{hyperref}

\usepackage{amssymb,upgreek,amsmath,bm,bbm,amsthm, graphicx, epstopdf, tikz,xcolor,tocbibind}
\usepackage{algpseudocode,algorithm,algorithmicx}
\usepackage[justification=centering]{subfig}
\usepackage[export]{adjustbox}
\usepackage{mystyle} % contains notations and macros
\graphicspath{{figures/}}
\usepackage[OT2,T1]{fontenc}
\DeclareSymbolFont{cyrletters}{OT2}{wncyr}{m}{n}
\DeclareMathSymbol{\Sha}{\mathalpha}{cyrletters}{"58}
\usepackage{geometry}\geometry{verbose,tmargin=2.5cm,bmargin=2.5cm,lmargin=2.5cm,rmargin=2.5cm,headheight=2.2cm,headsep=2.2cm,footskip=2.2cm}

\usepackage[page]{appendix}
\definecolor{DarkBlue}{rgb}{0.15,0.15,0.55}
\hypersetup{linktocpage=true,colorlinks=true,allcolors=DarkBlue}
\usepackage[capitalise]{cleveref}
\usepackage{titlesec}
\titleformat*{\paragraph}{\itshape\mdseries}

\title{TV-based Spline Reconstruction with Fourier Measurements: Uniqueness and Convergence of Grid-Based Methods}
%\title{Uniqueness of Solutions for TV-based Periodic Reconstruction from Fourier Samples}
%\title{TV-based Optimization with Fourier Measurements: \\ Splines Make It Unique}
\author{Thomas Debarre, Quentin Denoyelle, and Julien Fageot}
\begin{document} 

\maketitle

%ToDo V2: References to add~\cite{donoho1992superresolution,}

% !TEX root = ../main.tex

\begin{abstract}
We study the problem of recovering piecewise-polynomial periodic functions from their low-frequency information.
This means that we only have access to possibly corrupted versions of the Fourier samples of the ground truth up to a maximum cutoff frequency $K_c$. The reconstruction task is specified as an optimization problem with total-variation (TV) regularization (in the sense of measures) involving the $M$-th order derivative regularization operator $\Op L = \Op D^M$. The order $M \geq 1$ determines the degree of the reconstructed piecewise polynomial spline, whereas the TV regularization norm, which is known to promote sparsity, guarantees a small number of pieces. We show that the solution of our optimization problem is always unique, which, to the best of our knowledge, is a first for TV-based problems. Moreover, we show that this solution is a periodic spline matched to the regularization operator $\Op L$ whose number of knots is upper-bounded by $2 K_c$. We then consider the grid-based discretization of our optimization problem in the space of uniform $\Op L$-splines. On the theoretical side, we show that any sequence of solutions of the discretized problem converges uniformly to the unique solution of the gridless problem as the grid size vanishes. Finally, on the algorithmic side, we propose a B-spline-based algorithm to solve the grid-based problem, and we demonstrate its numerical feasibility experimentally. On both of these aspects, we leverage the uniqueness of the solution of the original problem.

%We show that, for elliptic regularization operators (\emph{e.g.}, the derivatives of any order), uniqueness is always guaranteed.
%To achieve this goal, we provide a new analysis of constrained optimization problems over Radon measures. 
%We demonstrate that either the solutions are always made of Radon measures of constant sign, or the solution is unique.
%Doing so, we identify a general sufficient condition for the uniqueness of the solution of a constrained optimization problem with TV-regularization, expressed in terms of the Fourier samples.
%Moreover, we provide a B-spline-based algorithm for the reconstruction of periodic functions and show that, in many cases, the  proposed discretized reconstruction uniformly converges to the unique solution of the continuous-domain problem as the grid size vanishes. 
%\ju{réécrire presque complètement. séparer les deux contributions : mesures pures et opérateurs.}
\end{abstract}

% !TEX root = ../main.tex

\section{Introduction}

In recent years, total-variation regularization techniques for continuous-domain inverse problems have shown to be very fruitful, with rapidly growing theoretical developments~\cite{Candes2013Super,Bredies2013inverse,Duval2015exact,Unser2017splines}, algorithmic progress~\cite{boyd2017alternating,denoyelle2019sliding,flinth2019linear}, and data science applications~\cite{courbot2019sparse,simeoni2020functional,courbot2020fast}.
%As is well known for its discrete-domain counterpart (\emph{i.e.}, $\ell_1$ optimization), this leads to variational problems whose solutions are not necessarily unique.

In this work, we study the reconstruction of an unknown periodic real function $f_0 : \mathbb{T} \rightarrow \mathbb{R}$ from the knowledge of its possibly noise-corrupted low-frequency Fourier series coefficients, where $\mathbb{T} = \R / 2\pi \Z = [0,2 \pi]$ is the torus whose two points $0$ and $2 \pi$ are identified. Let $K_c \geq 0$ be the cutoff frequency; we therefore have access to
\begin{equation} \label{eq:y}
    \bm{y} = (y_0, y_1, \ldots , y_{K_c}) \in \R \times \C^{K_c},
\end{equation}
where $y_k \approx \widehat{f}_0[k]$, the $k$th Fourier series coefficient of $f_0$. Note that, since $f_0$ is a real function, $y_0 \in \R$ is approximately  the mean $\widehat{f}_0[0] =  \langle f_0, 1\rangle$ of $f_0$, while $y_k \in \C$ for $k\neq 0$. Moreover, the Fourier series of $f_0$ is Hermitian symmetric, meaning that $\widehat{f}_0[-k] = \overline{\widehat{f}_0[k]} \in \C$ for every $k \in \Z$. The observation vector $\bm{y} $ in~\eqref{eq:y} has $2K_c + 1$ (real) degrees of freedom: one for the real mean $y_0$ and two for each other complex Fourier series coefficients in $\mathbb{C}$.
%Finally, we will always assume that $y_0 \geq 0$ (otherwise, we can consider the reconstruction of $-f_0$ instead of $f_0$). 

\subsection{Reconstruction via TV-based Optimization}
    
The recovery of a periodic function from finitely many observations is clearly an ill-posed problem. We choose to formulate the reconstruction task as a regularized optimization problem with a sparsity prior. More precisely, the reconstruction $f^*$ of $f_0$ is the solution of
\begin{equation} \label{eq:firstpb}
f^* \in \underset{f}{\arg\min} \  E(\bm{y}, \bm{\nu}(f))  + \lambda \lVert \Lop f \rVert_{\mathcal{M}},
\end{equation}
where $\bm{y} \in \R \times \C^{K_c}$ is the observation vector; $\bm{\nu}(f)$ is the measurement vector 
\begin{equation} \label{eq:nuf}
    \bm{\nu}(f) = (\widehat{f}[0] , \widehat{f}[1], \ldots , \widehat{f}[K_c] ) \in  \R \times \C^{K_c};
\end{equation}
$E(\V y , \cdot) : \R\times \C^{K_c} \rightarrow \R^+ \cup \{\infty\}$ is a data-fidelity functional which is a proper convex function, strictly convex over its effective domain\footnote{The effective domain of a convex function $g: X \rightarrow \R^+ \cup \{\infty\}$ is the set $\{x \in X, \ g(x) < \infty\}$~\cite{rockafellar1970convex}.}, lower semi-continuous (lsc), and coercive; $\lVert \cdot \rVert_{\mathcal{M}}$ is the total-variation (TV) norm on periodic Radon measures; and $\Lop$ is a regularization operator acting on periodic functions. For the sake of clarity, we focus on derivative operators of any order, \ie $\Lop = \Op D^M$ where $M \geq 1$, although our results can be extended to more general classes of operators such as fractional derivatives.
%The domain on which the functions $f$ are taken in \eqref{eq:firstpb} will be characterized thereafter.
%{\color{red}Question: what are the assumptions we fix on $E$? Strictly convex in its domain, admitting infinite values? Separability? Or at least a condition ensuring to separate between the null space and measure parts of the reconstruction for elliptic operators? It could be: $E (\bm{y}, \bm{z} ) = \sum_{k=0}^K E_k( y_k , z_k )$ but separability is more than what we really need. + we need $E_0(y_0, \cdot)$ to reach its mininmum at $y_0$.}

The data-fidelity term encourages the measurement vector $\bm{\nu}(f)$ to be close to the observations $\bm{y}$. A typical example is the quadratic functional
\begin{equation}\label{eq:datafidel}
E(\bm{y}, \bm{\nu}(f))  = \frac{1}{2} \lVert \bm{y} - \bm{\nu}(f) \rVert_2^2 = \frac{1}{2} \sum_{k=0}^{K_c} |y_k - \widehat{f}[k]|^2.
\end{equation}
The data fidelity~\eqref{eq:datafidel} is well-suited to an additive noise model where the measurements $\bm{y}$ are generated as $\bm{y} = \bm{\nu} ( f_0 ) + \bm{n}$ with $\bm{n}$ a complex Gaussian vector (see~\cite[Section IV-B]{badoual2018periodic} for more details).
Another case of interest is the indicator function $E(\bm{y}, \bm{\nu}(f)) = 0$ if $\bm{y} = \bm{\nu}(f)$ and $\infty$ otherwise, which leads to the constrained optimization problem\footnote{In this case, the value of the regularization parameter $\lambda > 0$ plays no role.} of the form
\begin{equation} \label{eq:constrained_pb_intro}
\underset{f, \ \bm{\nu}(f) =\bm{y}}{\arg\min} \lVert \Lop f \rVert_{\mathcal{M}}.
\end{equation}
Other classical data-fidelity functionals can be found in~\cite[Section 7.5]{simeoni2020functional}.

The choice of the total-variation norm promotes sparse and adaptive continuous-domain reconstruction, and has recently received a lot of attention (see Section~\ref{sec:relatedworks}). The operator $\Lop$ specifies the transform domain in which sparsity is enforced together with the regularity properties of the recovery.   %: Dirac recovery corresponds to $\mathrm{L} = \mathrm{Id}$ ~\cite{Candes2013Super}, and higher-order operators induce smoother reconstructions~\cite{Unser2017splines}.
In the absence of a regularization operator $\Op L$, Problem~\eqref{eq:firstpb} leads to the recovery of periodic Dirac masses; this scenario is the subject of our previous paper \cite{debarre2022uniqueness}. In the latter, we thoroughly analyze the cases of uniqueness of the solution of the resulting optimization problem. In particular, we show that unlike in this manuscript, both cases (uniqueness and non-uniqueness) can occur, and we gave a necessary and sufficient condition on the data vector $\V y$ that guarantees uniqueness.

\subsection{Contributions}
    
Problems of the form \eqref{eq:firstpb} have previously been studied in \cite{fageot2020tv} in a more general setting; the representer theorem from that paper guarantees the existence of a solution, without adjudicating on its uniqueness. Moreover, it gives the form of the extreme-point solution(s) as periodic $\Op L$-splines, \ie functions $f$ such that
    \begin{equation}
    \label{eq:spline_intro}
        \Lop f = \sum_{n=1}^N a_n \Sha(\cdot - x_n)
    \end{equation}
is a finite sum of shifted Dirac combs, the distinct Dirac locations $x_n$ being the knots of the spline (see Definition~\ref{def:Lspline}). Moreover, known proof techniques \cite{Unser2017splines, fageot2020tv, fisher1975spline} allow us to show that the number of knots $N$ is bounded by $N \leq 2 K_c +1$. 
%, as exemplified for Dirac recovery (corresponding to the identity operator $\Lop = \mathrm{I}$)~\cite{Candes2013Super}. 
Our contributions can be detailed as follows. \\

    \textit{(i) Uniqueness of the Solution.}
    Our main result is Theorem~\ref{theo:main}, in which we prove that
    %in our particular setting (with Fourier-domain measurements and a high-order derivative operator $\Op L$)
    the solution to Problem~\eqref{eq:firstpb} is always unique. Moreover, we slightly improve the upper bound on the number of knots to $K \leq 2 K_c$ in \eqref{eq:spline_intro}. Our proof relies on a result of our previous paper \cite[Corollary 1]{debarre2022uniqueness}. To the best of our knowledge, Theorem~\ref{theo:main} is the first systematic uniqueness result for the analysis of TV-based variational problems such as~\eqref{eq:firstpb}. This result has both theoretical and algorithmic implications, which we leverage in our other contributions.\\
    
   \textit{(ii) Uniform Convergence of Grid-Based Methods.}
    We study the grid-based discretization of Problem~\eqref{eq:firstpb}. More precisely, we restrict its search space to the finite-dimensional space of uniform $\Op L$-splines, \ie $\Op L$-splines whose knots lie on a uniform grid. We show that as the grid gets finer, any sequence of solutions of the discretized problems converges in uniform norm towards the unique solution of Problem~\eqref{eq:firstpb}. This form of convergence is remarkably strong: in particular, it implies convergence for any $L_p$ norm with $1 \leq p \leq \infty$. \\ %Existing related results for grid-based methods typically use tools from $\Gamma$-convergence \cite[Proposition 4]{duval2017sparseI} and exhibit weak$^*$-type convergence. \\
    
    \textit{(iii) Grid-Based Algorithm.}
    We propose a periodic adaptation of the B-spline-based algorithm developed in~\cite{Debarre2019} to solve Problem~\eqref{eq:firstpb}. Thanks to our aforementioned uniform convergence result, the reconstructed signal is guaranteed to be uniformly close to the gridless solution when the grid is sufficiently fine. We provide some experimental results of our algorithm on some simulated data that demonstrate its numerical feasibility.
  %  {\color{blue}\textit{(iii) A new sliding Frank-Wolfe algorithm for TV-regularization with elliptic operator:} 
  %  We introduce a reconstruction algorithm which is able to recover the unique solution to~\eqref{eq:firstpb} for an elliptic operator~$\Lop$. 
 %   This requires to adapt the Frank-Wolfe algorithm developed in~\cite{denoyelle2019sliding} to this setting. This adaptation is not obvious. For instance, the extreme-points of the unit ball of the native space $\mathcal{M}_{\Lop}(\T)$ of \eqref{eq:firstpb} (see Section \ref{sec:nativeetc}) are periodic $\Lop$-splines with two knots, which significantly impacts the algorithm.
 %   Our analysis includes theoretical guarantees for perfect reconstruction in finite time together with an empirical demonstration of the relevance of our approach on simulations.}

    \subsection{Related Works}\label{sec:relatedworks}

        \paragraph{Optimization over Radon measures:}
        
        There exists a vast literature concerned with Dirac recovery using the TV norm as a regularizer, that is, problems of the form~\eqref{eq:firstpb} in the absence of a regularization operator $\Op L$. Some of the more recent works concerning this topic include \cite{Candes2013Super, Bredies2013inverse, flinth2019linear, duval2017sparseI, de2012exact, candes2014towards,Azais2015Spike,duval2017sparseII,poon2019support}. However, our focus in this paper is on generalized TV regularization with a nontrivial operator $\Op L$. We refer to the introduction of our previous paper \cite{debarre2022uniqueness} for a more detailed coverage of the Dirac-recovery literature.

        \paragraph{From sparse measures to splines and beyond:}
          % The study of optimization problems of the form \eqref{eq:secondpb} can be traced back to the pioneering works of Beurling~\cite{beurling1938integrales}, where Fourier-sampling measurements were also considered.
        In recent years, several works have extended the TV-based Dirac recovery framework to smoother continuous-domain signals by considering generalized total-variation regularization, \emph{i.e.}, problems such as~\eqref{eq:firstpb} with a nontrivial regularization operator $\Lop$. In~\cite{Unser2017splines}, Unser \etal revealed the connection between the constrained problem \eqref{eq:firstpb} (in a non-periodic setting) and spline theory for general measurement functionals: the extreme-point solutions are necessarily $\Lop$-splines. This result was revisited, extended, and refined by several authors     ~\cite{simeoni2020functional,fageot2020tv,gupta2018continuous,aziznejad2021multikernel,boyer2019representer,bredies2019sparsity,flinth2019exact,Unser2019native,filbirsuper}.
        This manuscript will strongly rely on the periodic theory of TV-based optimization problem recently developed in~\cite{fageot2020tv}.
        
        The purpose of most of these works is to describe the solution sets of certain relevant optimization problems, which are typically nonunique. However, in our setting, as we show in Theorem~\ref{theo:main}, the solution to Problem~\eqref{eq:firstpb} is always unique. The closest work in this direction is our recent paper~\cite{debarre2020sparsest}, where we provide a full description of the solution set of nonperiodic TV-based optimization problems with a regularization operator $\Lop = \mathrm{D}^2$ (which leads to piecewise-linear reconstructions), and spatial sampling measurements $\V \nu$. This study includes the characterization of the cases of uniqueness~\cite[Proposition 6 and Theorem 2]{debarre2020sparsest}, which, contrary to Problem~\eqref{eq:firstpb}, is not systematic.
       % Our manuscript is, to the best of our knowledge, the first systematic uniqueness study for TV-optimization problems with Fourier sampling measurement. 

        \paragraph{Convergence results and algorithms for discretized problems.}
        
       The convergence of discretized optimization schemes to the solutions of continuous-domain TV-regularized problems has been studied by several authors, such as~\cite{Bredies2013inverse,Duval2015exact,denoyelle2019sliding,flinth2019linear,aubel2018theory}. Grid-based methods have specifically been considered in~\cite{Duval2015exact,duval2017sparseI,duval2017sparseII}. In these works, the authors prove convergence results in the weak* sense, which is adapted to the space of Radon measures, in a setting where no systematic uniqueness results are known.
        To the best of our knowledge, our work is the first to prove the convergence of solutions of discretized generalized TV-based problems towards the solution of the original problem, let alone in a strong sense such as the uniform norm. To achieve this, we leverage our uniqueness result of Theorem~\ref{theo:main}. On the algorithmic side, grid-based methods to solve optimization problems with TV-based regularization have been proposed in \cite{Debarre2019,gupta2018continuous,debarre2019hybrid,debarre2021continuous, llorensjover2022coupled}.
        
%        our work is the first one that considers the convergence of grid-based methods for spline reconstruction  that uses the systematic uniqueness results obtained for elliptic operators, and that provides a uniform convergence of the discretized solutions to the unique one of \eqref{eq:firstpb} as the grid size vanishes. 
  %      {\color{blue}\paragraph{Grid-based and gridless algorithms for measure and spline reconstruction:}
  %      In parallel to the development of the infinite-dimensional optimization theory over Banach measure spaces, significant efforts have been made to propose numerical algorithms to solve TV-based optimization problems.    
  %      This includes grid-based method for spline reconstruction~\cite{gupta2018continuous,Debarre2019}    
  %      and off-the-grid methods, that are mostly developed for Dirac stream reconstruction~\cite{courbot2019sparse,boyd2017alternating,flinth2019linear,denoyelle2019sliding}. In this paper, we use for a first time, to the best of our knowledge, the Frank-Wolfe algorithm for a spline-based reconstruction in the periodic setting.
   %     We also mention the development of semi-definite programming methods, that have been especially considered for low-frequency measurements~\cite{Candes2013Super,de2016exact,Catala2017low}.}
        
    \subsection{Outline}
    
    The paper is organized as follows. Section~\ref{sec:maths} introduces the mathematical material used in this paper. 
    In Section~\ref{sec:gtv_problem}, we present our optimization problem of interest \eqref{eq:firstpb} and prove that it always has a unique solution.
%    the implication for~\eqref{eq:firstpb} for invertible operator in Section~\ref{sec:solutioninvertibleop}, and the
 %   uniqueness of the solutions to~\eqref{eq:firstpb} for elliptic operators $\Lop$ in Section~\ref{sec:solutionuniquegeneral}. 
 %   {\color{blue}Section~\ref{sec:algo} presents our new sliding Frank-Wolfe algorithm for reaching the unique solution to~\eqref{eq:firstpb} for an elliptic regularization operator, together with its theoretical analysis and its application to simulations.}
 In Section~\ref{sec:grid_convergence}, we present our grid-based discretization of Problem~\eqref{eq:firstpb}, and prove that its solutions converge uniformly to that of the original problem when the grid size goes to zero. We present our method for solving this discretized problem using a B-spline basis in Section~\ref{sec:algo}. Finally, we exemplify our results on simulations in Section~\ref{sec:experiments}.

% !TEX root = ../main.tex

    \section{Mathematical Preliminaries}
    \label{sec:maths}
    
        \subsection{Periodic Functions and Periodic Splines} 
        \label{sec:opgrspline}
 
    We first introduce some notations and recall some basic facts concerning periodic functions and their Fourier series. More details can be found  in~\cite[Section 2]{fageot2020tv}. 
    The Schwartz space of infinitely smooth periodic function is denoted by $\Sch$, endowed with its usual Fr\'echet topology. Its topological dual is the space of periodic generalized functions $\Schp$. 
    
    For $k\in \Z$, let $e_k: \T \rightarrow \C$ be the complex exponential function $e_k(x) = \exp( \mathrm{i} k x)$, which is clearly in $\Sch$. 
    The Fourier series coefficients of $f \in \Schp$ are given by $\widehat{f}[k] = \langle f , e_k \rangle \in \C$. For a real function $f$, these coefficients are Hermitian symmetric, \emph{i.e.}, $\widehat{f}[-k] = \overline{\widehat{f}[k]}$ for all $k \in \Z$,
    which implies in particular that $\widehat{f}[0] \in \R$. We then have that 
    $f =  \sum_{k\in\Z} \widehat{f}[k] e_k$ for any $f \in \Schp$, where the convergence is in $\Schp$. 
    The Dirac stream is defined as $\Sha = \sum_{n\in\Z} \delta( \cdot - 2 \pi n)$. Its Fourier coefficients are $\widehat{\Sha}[k] = 1$ for any $k\in \Z$.
    %The Fourier sequence $(\widehat{f}[k])_{k\in \Z}$ of a periodic generalized function is bounded by a polynomial. 
        The derivative operator is denote by $\D$. More generally, we consider the $M$th-order derivative operator $\Lop = \D^M$ for a fixed integer $M\geq 1$.
    We then have that $ \Lop f = \sum_{k\in \Z} (\mathrm{i}  k)^M \widehat{f}[k] e_k$.  \\

    We define the \textit{periodic Green function} $g_{\Lop}$ of $\Lop = \D^M$ as the function
    \begin{equation}
    \label{eq:green_def}
        g_\Lop(x) = \sum_{k \neq 0} \frac{e_k}{(\mathrm{i} k)^M}.
    \end{equation}
    Then, $g_{\Lop}$ is the unique periodic and zero-mean function such that $\D^M g_\Lop = \Sha - 1$. It is worth noting that $g_\Lop$ is not a Green's function in the usual sense: there is no periodic function $g$ such that $\D^M g = \Sha$, since $\D^M g$ necessarily has zero mean, whereas the Dirac stream does not. See~\cite[Section 2.2]{fageot2020tv} for more details on this matter.
    
        \begin{definition}
        \label{def:Lspline}
       Let $M\geq 1$ and $\Lop = \D^M$. We say that $f$ is a \emph{periodic $\Lop$-spline} (or simply a $\Lop$-spline) if 
        \begin{equation} \label{eq:Lopf}
            \Lop f = w = \sum_{n=1}^N a_n \Sha(\cdot - x_n)
        \end{equation}
       where $N\geq 0$, $a_n \in \R\backslash\{0\}$, and the knots $x_n \in \T$ are pairwise distinct. We call $w$ the \emph{innovation} of the $\Lop$-spline $f$. 
        \end{definition}
        
        A function $f$ satisfies~\eqref{eq:Lopf} if and only if 
        \begin{equation}
        \label{eq:spline_green}
        f = a_0 + \sum_{n=1}^N a_n g_\Lop(\cdot - x_n)    
        \end{equation}
        for some $a_0 \in \R$. In this case, we necessarily have that $\sum_{n=1}
       ^N a_n = 0$. This is a particular case of~\cite[Proposition 2.8]{fageot2020tv} and simply follows from taking the mean (or $0$th Fourier coefficient) in~\eqref{eq:Lopf}, giving $0 = \widehat{L}[0] \widehat{f}[0] = \sum_{n=1}^N a_n$. It is worth noting that the Green's function $g_{\Lop}$ is \emph{not} a $\Lop$-spline. However, for any $x_0 \in \T \setminus \{0\}$, $g_{\Lop} - g_{\Lop}(\cdot - x_0)$ is a periodic $\Lop$-spline.
       
       A $\Lop$-spline is a periodic piecewise-polynomial function of degree at most $(M-1)$ and with $(M-2)$ continuous derivatives. The case $M=1$ corresponds to piecewise-constant functions, while $M=2$ leads to piecewise-linear continuous functions.

        \subsection{Periodic Radon Measures and Native Spaces}
        \label{sec:nativeetc}
        
        Let $\Rad$ be the space of periodic Radon measures. 
        By the Riesz-Markov theorem~\cite{gray1984shaping}, it is the continuous dual of the space $\mathcal{C}(\T)$ of continuous periodic functions endowed with the supremum norm. The total-variation norm on $\Rad$, for which it forms a Banach space, is given by
        \begin{equation} \label{eq:tvnorm}
            \lVert w \rVert_{\mathcal{M}} = \sup_{f \in \mathcal{C}(\T), \ \lVert f \rVert_\infty \leq 1} \langle w , f \rangle.
        \end{equation}
        We denote by $\mathcal{M}_0(\T)$ the set of Radon measures with  zero mean, \emph{i.e.} 
        $\mathcal{M}_0(\T) = \{w \in \mathcal{M}(\T), \ \widehat{w}[0] = 0\}$. It is the continuous dual of the space $\mathcal{C}_0(\T) = \{f \in \mathcal{C}(\T), \ \widehat{f}[0] = 0\}$ of continuous functions with zero mean.
        
        Let $\Lop = \D^M$ for some $M \geq 1$. We define the \emph{native space} associated to $\Lop$ as 
        \begin{equation}
            \mathcal{M}_{\Lop}(\T) = \{ f \in \Schp : \ \Lop f \in \Rad \}.
        \end{equation}
        Periodic native spaces have been studied for general spline-admissible operators (\emph{i.e.}, periodic operators with finite-dimensional null space and which admit a pseudoinverse) in~\cite[Section 3]{fageot2020tv}. Proposition~\ref{prop:periodicTVused} recalls some important properties of native spaces for the particular case of the $M$th order derivative operator. For this purpose, we define the pseudo-inverse operator  $\Lop^{\dagger}$ such that
        \begin{equation}
             \Lop^{\dagger} f = \sum_{k\neq 0} \frac{\widehat{f}[k]}{(\mathrm{i}k)^M} e_k
        \end{equation}
        for any $f \in \Spc S'(\T)$. In particular, we have that $\Lop^{\dagger} \Sha = g_\Lop$. 
        
        \begin{proposition}[Theorem 3.2 in \cite{fageot2020tv}]
        \label{prop:periodicTVused}
        Let $\Lop = \D^M$ for some $M \geq 1$. We have the direct-sum relation
        \begin{equation}
            \mathcal{M}_{\Lop}(\T) = \Lop^{\dagger} \mathcal{M}_0(\T) \oplus \Span\{1\},
        \end{equation}
         and any $f \in \mathcal{M}_{\Lop}(\T)$ has a unique decomposition as 
        \begin{equation}
            \label{eq:uniquedecompo}
            f = \Lop^{\dagger} w + a
        \end{equation}
        where $w \in \mathcal{M}_0(\T)$ and $a \in \R$ are given by $w = \Lop f$ and $a = \widehat{f}[0]$. Then, $\mathcal{M}_{\Lop}(\T)$ is a Banach space for the norm 
        \begin{equation}
            \lVert f \rVert_{\mathcal{M}_{\Lop}} = \lVert w \rVert_{\mathcal{M}} + |a|.
        \end{equation}
        \end{proposition}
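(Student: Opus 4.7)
The proof rests on two mode-by-mode Fourier identities, $\Lop\Lop^{\dagger} w = w$ for $w \in \mathcal{M}_0(\T)$ and $\Lop^{\dagger}\Lop f = f - \widehat{f}[0]$ for $f \in \mathcal{M}_\Lop(\T)$, each verified term-wise from $\widehat{\Lop f}[k] = (\mathrm{i}k)^M \widehat{f}[k]$ and the fact that $\Lop^{\dagger}$ zeroes the constant mode. Before invoking them, I would first check that $\Lop^{\dagger}$ genuinely sends $\mathcal{M}_0(\T)$ into $\Schp$: any $w \in \Rad$ has bounded Fourier coefficients, so $\widehat{w}[k]/(\mathrm{i}k)^M = O(|k|^{-M})$, and the series $\sum_{k\neq 0} \widehat{w}[k]/(\mathrm{i}k)^M \, e_k$ converges in $\Schp$ (in fact uniformly to a continuous function as soon as $M \geq 2$).

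For the decomposition, I would take any $f \in \mathcal{M}_\Lop(\T)$ and set $w := \Lop f$, noting that $\widehat{w}[0] = (\mathrm{i}\cdot 0)^M \widehat{f}[0] = 0$ automatically places $w$ in $\mathcal{M}_0(\T)$. Then the candidate identity $f = \Lop^{\dagger} w + a$ with $a := \widehat{f}[0]$ is confirmed by matching Fourier coefficients at each $k$. For uniqueness, suppose $f = \Lop^{\dagger} w' + a'$ with $w' \in \mathcal{M}_0(\T)$ and $a' \in \R$; applying $\Lop$ and using $\Lop(1) = 0$ together with $\Lop\Lop^{\dagger} = \mathrm{Id}$ on $\mathcal{M}_0(\T)$ recovers $w' = \Lop f = w$, and reading off the constant mode yields $a' = \widehat{f}[0] = a$. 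The direct-sum condition $\Lop^{\dagger} \mathcal{M}_0(\T) \cap \Span\{1\} = \{0\}$ is immediate from the observation that $\Lop^{\dagger}$ populates only modes $k \neq 0$, so $\Lop^{\dagger} \mathcal{M}_0(\T) \subset \mathcal{M}_0(\T)$, which contains no nonzero constant.

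For the Banach property, the decomposition map $\Phi : f \mapsto (\Lop f, \widehat{f}[0])$ is, by the previous step, a linear bijection from $\mathcal{M}_\Lop(\T)$ onto $\mathcal{M}_0(\T) \times \R$, and it is an isometry when the target carries the sum norm $\|w\|_{\mathcal{M}} + |a|$. Since $\mathcal{M}_0(\T)$ is the kernel of the continuous functional $w \mapsto \widehat{w}[0]$ on the Banach space $\Rad$, it is closed and hence Banach; the product with $\R$ is thus Banach, and so is $\mathcal{M}_\Lop(\T)$ via $\Phi$. The main obstacle throughout is the topological one flagged at the start: making sure the formal Fourier manipulations take place in genuine distribution spaces, i.e.\ that $\Lop^{\dagger} w \in \Schp$ and that $\Lop$ acts continuously term-by-term; once this is secured the rest reduces to a short Fourier-side computation and a transport-of-structure argument.
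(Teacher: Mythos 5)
Your proof is correct and self-contained; note that the paper itself gives no proof of this proposition, importing it verbatim as Theorem~3.2 of the cited reference, where the argument (for general spline-admissible operators) is the same pseudoinverse-based one you give: the identities $\Lop\Lop^{\dagger} = \mathrm{Id}$ on $\mathcal{M}_0(\T)$ and $\Lop^{\dagger}\Lop f = f - \widehat{f}[0]$, the resulting direct sum, and transport of the Banach structure through the isometry $f \mapsto (\Lop f, \widehat{f}[0])$. The only step worth making fully explicit is surjectivity of that map onto $\mathcal{M}_0(\T)\times\R$ --- i.e.\ that $\Lop^{\dagger}w + a$ genuinely lies in $\mathcal{M}_{\Lop}(\T)$ for every pair $(w,a)$ --- but this is immediate from $\Lop\Lop^{\dagger}w = w \in \Rad$, which your identities already supply.
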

        
  %  Note that the measurement functional $\bm{\nu}$ in \eqref{eq:nuf} is well defined over $\mathcal{M}_{\Lop}(\T)$, and more generally over $\Schp$, since complex exponentials are infinitely smooth. 

% !TEX root = ../main.tex

\section{Uniqueness of TV-Based Penalized Problems}
            \label{sec:gtv_problem}

        It is well known that TV-based optimization problems with regularization operators lead to splines solutions~\cite{Unser2017splines}. This is both an existence result and a \emph{representer theorem}, which provides the form of the (extreme-point) solutions of the optimization task. Here, we focus on problems of the form~\eqref{eq:firstpb}, whose main specificity compared to related works in the literature is the periodic setting and the Fourier-domain measurement operator $\V \nu$.

 We now state our main result, which guarantees the uniqueness of the solution to Problem~\eqref{eq:firstpb}. The proof relies on a result of our previous paper \cite[Corollary 1]{debarre2022uniqueness} by reformulating Problem~\eqref{eq:firstpb} over the space of Radon measures; interestingly, the regularization operator $\Op L = \Op D^M$ leads to systematic uniqueness, which is not true of generic problems formulated over Radon measures.  %Our result also (slightly) improves the upper bound on the number of knots of the solution from $2K_c + 1$ to $2K_c$ for our specific setting.
\begin{theorem} \label{theo:main}
        Let $\Lop = \Op D^M$ with $M \geq 1$, $K_c \geq 0$ be the cutoff frequency of the low-pass filter $\bm{\nu} : \mathcal{M}_{\Lop}(\T) \rightarrow \R \times \C^{K_c}$ defined in \eqref{eq:nuf}, $\bm{y} \in \R \times \C^{K_c}$, $E(\V y , \cdot) : \R\times \C^{K_c} \rightarrow \R^+$ be a functional which is a proper convex function, strictly convex over its effective domain, lsc, and coercive, and $\lambda > 0$.
        Then, the optimization problem
        \begin{equation}
        \label{eq:pbelliptic}
            \mathcal{V}_\lambda(\bm{y}) = 
            \underset{f\in \mathcal{M}_{\Lop}(\T)} {\arg \min} \  E(\bm{y}, \bm{\nu}(f))  + \lambda \lVert \Lop f \rVert_{\mathcal{M}},
        \end{equation}
        admits a unique solution which is a $\Lop$-spline whose number of knots is bounded by $2K_c$. 
\end{theorem}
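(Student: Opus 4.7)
The plan is to reduce Problem~\eqref{eq:pbelliptic} to the Dirac-recovery setting of our prior work~\cite{debarre2022uniqueness}. The two ingredients are the direct-sum decomposition of the native space from Proposition~\ref{prop:periodicTVused}, which separates the mean of $f$ from its $\Op L$-derivative, and the strict convexity of the data-fidelity term, which forces the measurement vector to be constant across the solution set.

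First, I would parametrize any $f \in \mathcal{M}_{\Lop}(\T)$ via the unique pair $(w,a) \in \mathcal{M}_0(\T) \times \R$ with $w = \Lop f$ and $a = \widehat{f}[0]$, so that $f = \Lop^{\dagger} w + a$ and $\|\Lop f\|_{\mathcal{M}} = \|w\|_{\mathcal{M}}$. Since $\widehat{\Lop}[k] = (\mathrm{i}k)^M$, the measurement vector rewrites as
\begin{equation*}
\bm{\nu}(f) \ = \ \bigl( a, \ \widehat{w}[1]/\mathrm{i}^M, \ \ldots, \ \widehat{w}[K_c]/(\mathrm{i}K_c)^M \bigr),
\end{equation*}
turning Problem~\eqref{eq:pbelliptic} into a joint optimization over $(w,a)$. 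The existence of a minimizer follows from the representer theorem in~\cite{fageot2020tv}, which also guarantees that some solution is an $\Op L$-spline.

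Second, I would argue that every solution shares the same image under $\bm{\nu}$. Indeed, if two solutions $f_1, f_2$ produced $\bm{\nu}(f_1)\neq\bm{\nu}(f_2)$, strict convexity of $E(\bm{y},\cdot)$ on its effective domain combined with convexity of $\|\cdot\|_{\mathcal{M}}$ would make $(f_1+f_2)/2$ strictly better than the common optimal value, a contradiction. This pins down the constant $a=a^\star$ and fixes $\widehat{w}[k]$ for $1\leq k \leq K_c$ (and, by Hermitian symmetry and $w\in\mathcal{M}_0(\T)$, all $\widehat{w}[k]$ for $|k|\leq K_c$) across the entire solution set. Uniqueness of $f$ then boils down to the uniqueness of $w \in \mathcal{M}_0(\T)$ minimizing $\|w\|_{\mathcal{M}}$ under these linear Fourier-type constraints.

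Third, this residual problem is precisely the constrained TV-minimization for periodic Radon measures with low-pass Fourier data studied in~\cite{debarre2022uniqueness}. Applying \cite[Corollary 1]{debarre2022uniqueness} yields that the minimizer $w^\star$ is unique and is a sum of at most $2K_c$ Dirac masses. Reinserting into the decomposition, $f^\star = \Lop^{\dagger} w^\star + a^\star$ is the unique solution of~\eqref{eq:pbelliptic}, and by Definition~\ref{def:Lspline} it is an $\Lop$-spline with at most $2K_c$ knots.

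The main obstacle will be the third step: verifying that the constraint set appearing in the reduced problem (zero mean, prescribed low-frequency Fourier coefficients, Hermitian symmetry) matches exactly the hypotheses of \cite[Corollary 1]{debarre2022uniqueness}, so that the uniqueness conclusion is unconditional in our setting. This is also where the sharpened bound $2K_c$ comes from, as opposed to the $2K_c+1$ that a straightforward appeal to the representer theorem of~\cite{fageot2020tv} would produce; the zero-mean constraint on $w$ effectively removes one Dirac.
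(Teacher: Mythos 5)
Your proposal follows essentially the same route as the paper's proof: strict convexity of $E(\bm{y},\cdot)$ pins down the common measurement vector across the solution set, the native-space decomposition of Proposition~\ref{prop:periodicTVused} reduces the problem to TV minimization over zero-mean measures with prescribed low-frequency Fourier data, and \cite[Corollary 1]{debarre2022uniqueness} delivers uniqueness and the $2K_c$ bound. The only detail the paper adds, which resolves the ``obstacle'' you flag in your third step, is the separate (trivial) treatment of the degenerate case where the reduced data vector $\V z$ is zero, since that corollary is applied under the hypotheses $z_0 = 0$ and $\V z \neq \V 0$.
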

\begin{proof}

        Using a classical argument based on the strict convexity of $E(\bm{y},\cdot)$ (see for instance~\cite[Proposition 7]{debarre2020sparsest}), we deduce that all solutions $f^\ast$ of Problem~\eqref{eq:pbelliptic} share an identical observation vector $\bm{y}_\lambda \in \R^M$, that is, $\forall f^\ast \in \mathcal{V}_{\lambda}(\bm{y})$, we have $\V\nu(f^\ast) = \bm{y}_\lambda$. Hence, Problem~\eqref{eq:pbelliptic} is equivalent to 
        \begin{equation} \label{eq:constrained_pb}
            \mathcal{V}_{\lambda}(\bm{y}) =  \underset{f \in \mathcal{M}_{\Lop}(\T), \ \bm{\nu}(f) = \bm{y}_{\lambda}}{\arg \min} \lVert \Lop f \rVert_{\mathcal{M}}.
        \end{equation}
               By Proposition~\ref{prop:periodicTVused}, any $f \in \mathcal{M}_{\Lop}(\T)$ admits a unique decomposition $f = \Lop^\dagger w + a \in \mathcal{M}_{\Lop}(\T)$ with $(w, a) \in\mathcal{M}_0(\T) \times \R$. By plugging in this expansion into the cost functional of Problem~\eqref{eq:constrained_pb}, we get that the latter is equivalent to 
        \begin{align}
        \label{eq:reformulation_measures_indicator}
            \underset{(w,a) \in \mathcal{M}_{0}(\T) \times \R, \ \bm{\nu}(\Lop^\dagger w + a) = \V y_\lambda}{\arg \min} \lVert w \rVert_{\mathcal{M}} \Longleftrightarrow  \underset{w \in \mathcal{M}_{0}(\T), \ \bm{\nu}(w) = \V z }{\arg \min} \lVert w \rVert_{\mathcal{M}},
        \end{align}
        where $\bm{z} \in \R\times \C^{K_c}$ is defined as $z_0 = 0$ and $z_k = \widehat{L}[k] {(\V y_\lambda)}_k$ for $k \neq 0$. The equivalence in \eqref{eq:reformulation_measures_indicator} comes from the fact that $\bm{\nu}(\Lop^{\dagger} w + a) = \left( a , \frac{\widehat{w}[1]}{\widehat{L}[1]} , \ldots, \frac{\widehat{w}[K_c]}{\widehat{L}[K_c]} \right)$. Any $f^* \in \mathcal{V}_{\lambda}(\bm{y})$ can thus be decomposed as $f^* = \Lop^\dagger w^* + {(\V y_\lambda)}_0$ where $w^*$ is a solution of Problem~\eqref{eq:reformulation_measures_indicator}.
        
   We now prove that Problem~\eqref{eq:reformulation_measures_indicator} has a unique solution $w^*$ which is a sum of at most $2 K_c$ Dirac impulses. If $\V z = \V 0$, then the result trivially holds, the unique solution being $w^\ast = 0$. We now assume that $\V z \neq \V 0$. In this case, Problem~\eqref{eq:reformulation_measures_indicator} has a nonzero optimal value due to the fact that $w=0$ cannot be a solution. Since $z_0 = 0$ and $\V z \neq 0$, Problem~\eqref{eq:reformulation_measures_indicator} satisfies the assumptions of \cite[Corollary 1]{debarre2022uniqueness}. Hence, the uniqueness of $w^*$ and the fact that it is a sum of at most $2K_c$ Dirac impulses. This in turn implies the uniqueness of the solution $f^* = \Lop^\dagger w^* + (\V y_\lambda)_0$ as well as the fact that it is an $\Op L$-spline with at most $2 K_c$ knots.
\end{proof}

\textit{Remark.} Theorem~\ref{theo:main} remains valid for more general operators $\Op L$, namely any spline-admissible operator in the sense of \cite[Definition 2]{fageot2020tv} whose null space includes constant functions, \ie $\Op L \{ 1 \} = 0$.

Theorem~\ref{theo:main} has three components: i) it guarantees the uniqueness of the solution, it provides ii) the form of the solution and iii) an upper bound on the number of knots of the solution. The first item, arguably the most striking one, is completely new; existing results typically provide the form of extreme-point solutions of the problem. We are not aware of any other systematic uniqueness results concerning inverse problems with TV-based regularization in the literature. The second item is already known; it has been proved for our setting in \cite[Theorem 4]{fageot2020tv}. Finally, concerning the third item, known proof techniques \cite{Unser2017splines, fageot2020tv, fisher1975spline} allow us to reach the bound $2 K_c + 1$, which we improve to $2 K_c$. 

One can actually be slightly more precise and show that the mean of the solution is known under very mild conditions on the cost functional $E$. Under this assumption, we also provide a reformulation of Problem~\eqref{eq:pbelliptic} over the space of Radon measures.
        
        \begin{proposition} \label{prop:y0forz0}
        We assume that we are under the conditions of Theorem \ref{theo:main} and that the data-fidelity cost functional $E$ is such that for any fixed $(z_1, \ldots , z_{K_c}) \in \C^{K_c}$, we have
        \begin{equation} \label{eq:y0argmin}
            y_0 = \argmin_{z_0 \in \R} \ E ( \bm{y}, \bm{z})
        \end{equation}
        where $\bm{y} = (y_0, y_1, \ldots , y_{K_c}) \in \R \times \C^{K_c}$ and $\bm{z} = (z_0, z_1, \ldots , z_{K_c}) \in \R \times \C^{K_c}$. Then, the unique solution $f^*$ to \eqref{eq:pbelliptic} admits the decomposition $f^* = y_0 + \Lop^\dagger w^*$ where
        \begin{align}
        \label{eq:reformulation_measures}
            w^* = \underset{w \in \mathcal{M}_{0}(\T)}{\arg \min} E( \V y, \bm{\nu}(\Lop^\dagger w + y_0) ) + \lambda \lVert w \rVert_{\mathcal{M}}.
        \end{align}
        In particular, this implies that $\widehat{f^*}[0] = y_0$.
        \end{proposition}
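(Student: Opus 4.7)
The plan is to use the unique decomposition of elements of $\mathcal{M}_{\Lop}(\T)$ provided by Proposition~\ref{prop:periodicTVused} together with the hypothesis~\eqref{eq:y0argmin} to pin down the constant part of the solution. First, I invoke Theorem~\ref{theo:main} to obtain the unique solution $f^\ast \in \mathcal{M}_{\Lop}(\T)$ of Problem~\eqref{eq:pbelliptic}, and decompose it as $f^\ast = \Lop^\dagger w^\ast + a^\ast$ with $w^\ast = \Lop f^\ast \in \mathcal{M}_0(\T)$ and $a^\ast = \widehat{f^\ast}[0] \in \R$. Using the formula for $\bm{\nu}(\Lop^\dagger w^\ast + a^\ast)$ already derived in the proof of Theorem~\ref{theo:main}, namely $\bm{\nu}(\Lop^\dagger w^\ast + a^\ast) = \bigl(a^\ast,\widehat{w^\ast}[1]/\widehat{L}[1],\ldots,\widehat{w^\ast}[K_c]/\widehat{L}[K_c]\bigr)$, together with $\lVert \Lop f^\ast \rVert_{\mathcal{M}} = \lVert w^\ast \rVert_{\mathcal{M}}$, the value of the objective at $f^\ast$ reads
\[
E\bigl(\bm{y},(a^\ast,\widehat{w^\ast}[1]/\widehat{L}[1],\ldots,\widehat{w^\ast}[K_c]/\widehat{L}[K_c])\bigr) + \lambda \lVert w^\ast \rVert_{\mathcal{M}}.
\]

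Second, I argue that $a^\ast = y_0$. The regularization term does not depend on $a^\ast$, while the data-fidelity term, viewed as a function of $a^\ast$ alone with the other coordinates frozen, is uniquely minimized at $y_0$ by assumption~\eqref{eq:y0argmin}. Since $\Lop^\dagger w^\ast + y_0$ lies in $\mathcal{M}_{\Lop}(\T)$ and is therefore a feasible competitor, if $a^\ast \neq y_0$ this competitor would deliver a strictly smaller objective value, contradicting the optimality of $f^\ast$. Hence $a^\ast = y_0$, which in particular gives $\widehat{f^\ast}[0] = y_0$.

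Third, I identify $w^\ast$ as the solution of~\eqref{eq:reformulation_measures}. Restricting Problem~\eqref{eq:pbelliptic} to the affine subset $\{\Lop^\dagger w + y_0 : w \in \mathcal{M}_0(\T)\} \subset \mathcal{M}_{\Lop}(\T)$ yields exactly the minimization~\eqref{eq:reformulation_measures}. Since $f^\ast = \Lop^\dagger w^\ast + y_0$ belongs to this subset and is globally optimal over all of $\mathcal{M}_{\Lop}(\T)$, it is \emph{a fortiori} optimal over this subset, so $w^\ast$ solves~\eqref{eq:reformulation_measures}. Uniqueness of $w^\ast$ in~\eqref{eq:reformulation_measures} follows from uniqueness of $f^\ast$ combined with the uniqueness of the decomposition in Proposition~\ref{prop:periodicTVused}.

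I do not anticipate a serious obstacle: the argument is essentially a partial minimization exploiting that the regularizer only depends on $w$. The only delicate point is to ensure the comparison ``$a^\ast \neq y_0$ gives strictly smaller value'' is truly strict; this is where hypothesis~\eqref{eq:y0argmin} must be read as asserting a \emph{unique} minimizer (as the equality notation suggests), which is consistent with the strict convexity of $E(\bm{y},\cdot)$ on its effective domain already assumed in Theorem~\ref{theo:main}.
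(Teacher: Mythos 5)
Your proof is correct and follows essentially the same route as the paper: decompose $f^\ast = \Lop^\dagger w^\ast + a^\ast$ via Proposition~\ref{prop:periodicTVused}, observe that the regularizer is independent of $a^\ast$ while hypothesis~\eqref{eq:y0argmin} forces $a^\ast = y_0$ (with the same strict-minimizer reading of~\eqref{eq:y0argmin} that the paper uses), and then identify $w^\ast$ with the unique solution of~\eqref{eq:reformulation_measures} via uniqueness of $f^\ast$ and of the decomposition.
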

        
        \begin{proof}
        Similarly to our manipulation in \eqref{eq:reformulation_measures_indicator}, Problem~\eqref{eq:pbelliptic} is equivalent to 
        \begin{align}
        \label{eq:reformulation_measures_full}
            (w^*, a^*) = \underset{(w,a) \in \mathcal{M}_{0}(\T) \times \R}{\arg \min} E( \V y, \bm{\nu}(\Lop^\dagger w + a) ) + \lambda \lVert w \rVert_{\mathcal{M}},
        \end{align}
         with $f^* = \Lop^\dagger w^* + a^*$. Problem~\eqref{eq:reformulation_measures_full} has a unique solution due to that of Problem~\eqref{eq:pbelliptic} (proved in Theorem~\ref{theo:main}), and to the uniqueness of the decomposition of $f^*$ (Proposition~\ref{prop:periodicTVused}).
        Then, we have that $\bm{\nu}(a^* + \Lop^{\dagger} w^*) = ( a^* , \widehat{\Lop^\dagger w^*}[1], \cdots , \widehat{\Lop^\dagger w^*}[K_c])$, which by \eqref{eq:y0argmin} implies that $E( \bm{y}, \bm{\nu}( a^* + \Lop^\dagger w^* ) ) \geq E( \bm{y}, \bm{\nu}( y_0 + \Lop^\dagger w^* ) )$, with equality if and only if $a^* = y_0$. Hence, since the constant $a$ does not impact the regularization in \eqref{eq:reformulation_measures}, we must have that $a^* = \widehat{f^*}[0] = y_0$. Problem~\eqref{eq:reformulation_measures_full} can thus be rewritten as \eqref{eq:reformulation_measures}.
        \end{proof}

        \textit{Remark.} The relation \eqref{eq:y0argmin} holds for virtually all classical cost functionals, including any $\ell_p$ norm-based cost such as the quadratic data fidelity \eqref{eq:datafidel}, or any separable cost whose minimum over each component is reached when $y_m = z_m$, such as indicator functions. Proposition~\ref{prop:y0forz0} ensures that the mean of the solution of Problem~\eqref{eq:pbelliptic} is given by  $\widehat{f^*}[0] = y_0$. 
\section{Uniform Convergence of Grid-Based Methods}
\label{sec:grid_convergence}

A common way to solve infinite-dimensional continuous-domain problems such as \eqref{eq:penalized_pb_quadratic} algorithmically is to discretize them using a uniform finite grid \cite{duval2017sparseI, duval2017sparseII}. In this section, we propose such a discretization method of the problem
\begin{align}\label{eq:penalized_pb_quadratic}
f^* =	\underset{f\in\Spc M_{\Op L}(\T)}{\arg \min} \ \frac{1}{2} \norm{\V{y} - \nuf(f) }_2^2 + \la\mnorm{\Op L f},
\end{align}
that is, Problem~\eqref{eq:pbelliptic} with a quadratic data-fidelity cost $E(\V y, \V z) = \frac12 \Vert \V y - \V z \Vert^2_2$. Note that we no longer denote the solution of Problem~\eqref{eq:penalized_pb_quadratic} as a set but as a function $f^*$, since Theorem~\ref{theo:main} guarantees that this solution is unique. We restrict to the case of the quadratic data fidelity for the sake of simplicity, although our results hereafter hold for more general choices of $E$. Our choice clearly satisfies the assumption of Proposition~\ref{prop:y0forz0}, hence the solution $f^*$ of \eqref{eq:penalized_pb_quadratic} satisfies $\widehat{f}^*[0] = y_0$. 
%\cite[Table 1]{fageot2020tv}. 

Our discretization method, which was introduced for similar problems in~\cite{Debarre2019}, consists in restricting the search space of Problem~\eqref{eq:penalized_pb_quadratic} to the space of uniform $\Op L$-splines $\MLh$, \ie $\Op L$-splines in the sense of Definition~\ref{def:Lspline} with knots $x_n$ on a uniform grid. The space $\MLh$ is defined as follows for a grid size $h = \frac{2 \pi}{P}$, where $P \in \N$, $P \geq 1$, is the number of grid points:
\begin{align}
\label{eq:space_uniform_splines}
    \MLh = \left\{ f\in \Spc S'(\TT), \ \Op L f = \sum_{p=0}^{P-1} a[p] \Sha \left(\cdot -  \frac{2 \pi p}P \right) \right\}.
\end{align}

Our choice of restricting the search space of Problem~\eqref{eq:penalized_pb_quadratic} to $\MLh$ is guided by Theorem~\ref{theo:main}, which states that the unique solution to this problem is a $\Op L$-spline. Hence, this choice of space is compatible with the sparsity-promoting regularization $\Vert \Op L \cdot \Vert_\Spc M$. Although in general, the solution of our problem does not have knots on a uniform grid, it can be approximated arbitrary closely with an element of $\MLh$ when $P$ is large. The other main feature of our method is that the computations are exact in the continuous domain, both those of the forward model and of the regularization term. Our discretized optimization problem then becomes
\begin{align}
\label{eq:discretized_continuous_pb}
\Spc V_{\lambda, P}(\V y) =	\underset{f\in \MLh}{\argmin} \ \frac{1}{2} \norm{\V{y} - \nuf(f) }_2^2 + \la\mnorm{\Op L f}.
\end{align}
Note that contrary to the original Problem~\eqref{eq:penalized_pb_quadratic}, the solution set $\Spc V_{\lambda, P}(\V y)$ of the discretized problem~\eqref{eq:discretized_continuous_pb} is not necessarily unique. 

As we shall demonstrate in Section~\ref{sec:algo}, Problem~\eqref{eq:penalized_pb_quadratic} can be solved algorithmically with standard finite-dimensional solvers. However, the important question of how well it approximates the original Problem~\eqref{eq:penalized_pb_quadratic} still remains. We answer this question in Theorem~\ref{thm:unifconv} by proving that any sequence of elements of $\Spc V_{\lambda, P}(\V y)$ converge in a strong sense --- namely, uniform convergence --- towards $f^*$ when $P \to \infty$.

\begin{theorem}
\label{thm:unifconv}
Let $\Op L = \Op D^M$ with $M \geq 2$, $\bm{y} \in \R^+ \times \C^{K_c}$, and $\lambda > 0$. We denote by $f^*$ the unique solution to \eqref{eq:penalized_pb_quadratic}. For any $P \geq 1$, we set $f_P^* \in \Spc V_{\lambda, P}(\V y)$. Then, we have that
\begin{equation} \label{eq:unifconvmoica}
\lVert f^* - f_P^* \rVert_\infty  \underset{P\rightarrow \infty}{\longrightarrow} 0. 
\end{equation}
\end{theorem}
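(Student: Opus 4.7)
The plan is to use a standard $\Gamma$-convergence-type argument: first establish a $\limsup$ inequality by constructing an explicit approximation of $f^*$ by uniform $\Op L$-splines, then use compactness and lower semi-continuity to extract a subsequential uniform limit, and finally invoke the uniqueness from Theorem~\ref{theo:main} to identify this limit as $f^*$ and upgrade subsequential to full-sequence convergence. Throughout, I will write $f = y_0 + \Op L^\dagger w$ with $w = \Op L f \in \Spc M_0(\T)$ using Proposition~\ref{prop:periodicTVused}; note that an analogue of Proposition~\ref{prop:y0forz0} applies in $\Spc M_{\Op L, h}$ (which is invariant under addition of constants), so $\widehat{f_P^*}[0] = y_0$ as well, which lets me reduce everything to the zero-mean measures $w_P^* = \Op L f_P^*$.

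For the $\limsup$ inequality, write $f^* = y_0 + \sum_{n=1}^N a_n^* g_\Op L(\cdot - x_n^*)$ with $\sum_n a_n^* = 0$, and define $\widetilde f_P = y_0 + \sum_{n=1}^N a_n^* g_\Op L(\cdot - \widetilde x_n^{(P)})$ where $\widetilde x_n^{(P)}$ is the grid point of $\tfrac{2\pi}{P} \Z$ nearest to $x_n^*$. Then $\widetilde f_P \in \Spc M_{\Op L, h}$ by construction. Because $M \geq 2$, the Green's function $g_\Op L$ is continuous (hence uniformly continuous on $\T$), so $\widetilde f_P \to f^*$ uniformly as $P \to \infty$; in particular $\nuf(\widetilde f_P) \to \nuf(f^*)$. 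Moreover $\mnorm{\Op L \widetilde f_P} \leq \sum_n |a_n^*| = \mnorm{\Op L f^*}$ (with equality once all $\widetilde x_n^{(P)}$ are distinct). Combining these yields $\limsup_P J(\widetilde f_P) \leq J(f^*)$, where $J$ denotes the objective of \eqref{eq:penalized_pb_quadratic}, and by optimality $J(f_P^*) \leq J(\widetilde f_P)$.

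For compactness and identification, the bound on $J(f_P^*)$ gives uniform bounds on both $\|\V y - \nuf(f_P^*)\|_2$ and $\mnorm{w_P^*}$. By weak-$\ast$ sequential compactness in $\Spc M(\T) = C(\T)^\ast$, some subsequence satisfies $w_{P_k}^* \overset{\ast}{\rightharpoonup} w^\infty$ in $\Spc M_0(\T)$ (the zero-mean condition passes to the limit since $e_0 = 1 \in C(\T)$). Set $f^\infty = y_0 + \Op L^\dagger w^\infty$. The main technical step is to promote this weak-$\ast$ convergence to uniform convergence of $f_{P_k}^* \to f^\infty$: using $(\Op L^\dagger w)(x) = \langle w, g_\Op L(x - \cdot)\rangle$, pointwise convergence follows directly from weak-$\ast$ convergence, while the equicontinuity estimate
\begin{equation*}
\bigl| (\Op L^\dagger w_{P_k}^*)(x) - (\Op L^\dagger w_{P_k}^*)(y) \bigr| \leq \mnorm{w_{P_k}^*} \cdot \bigl\| g_\Op L(x - \cdot) - g_\Op L(y - \cdot) \bigr\|_\infty,
\end{equation*}
together with the uniform continuity of $g_\Op L$ (again using $M \geq 2$) and the boundedness of $\mnorm{w_{P_k}^*}$, gives equicontinuity of the family; Arzelà–Ascoli then upgrades to uniform convergence. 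Passing to the limit in $J$ via continuity of $\nuf$ on $(C(\T), \|\cdot\|_\infty)$ and weak-$\ast$ lower semi-continuity of $\mnorm{\cdot}$, we obtain $J(f^\infty) \leq \liminf_k J(f_{P_k}^*) \leq \limsup_P J(f_P^*) \leq J(f^*)$. By the uniqueness in Theorem~\ref{theo:main}, $f^\infty = f^*$. Since every subsequence of $(f_P^*)$ admits a further subsequence converging uniformly to the same limit $f^*$, the whole sequence converges uniformly, which is \eqref{eq:unifconvmoica}. The main obstacle I anticipate is precisely the upgrade from weak-$\ast$ to uniform convergence; this is where the hypothesis $M \geq 2$ (continuity of $g_\Op L$) is essential and cannot be dispensed with by this route.
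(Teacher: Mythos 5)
Your proof is correct, and its second half coincides with the paper's: both reduce to the zero-mean innovation measures $w_P^* = \Op L f_P^*$ (using that the mean of every $f_P^*$ is pinned to $y_0$), establish weak* convergence of these measures, and then upgrade to uniform convergence of the $f_P^*$ through pointwise convergence plus an equicontinuity bound of the form $\mnorm{w_P^*}$ times a modulus of continuity of $g_{\Op L}$ --- the paper uses the Lipschitz constant of $g_{\Op L}$ and you use its uniform continuity, both of which are available precisely because $M \geq 2$. The genuine difference is in how the weak* convergence is obtained. The paper gets the convergence of the \emph{full} sequence $w_P^* \to w^*$ by citing the $\Gamma$-convergence result of Duval and Peyr\'e, whereas you prove it from scratch: a $\limsup$ inequality built by snapping the finitely many knots of the spline $f^*$ (whose form is guaranteed by Theorem~\ref{theo:main}) onto the grid, Banach--Alaoglu compactness, weak* lower semicontinuity of the TV norm, identification of the subsequential limit via the uniqueness in Theorem~\ref{theo:main}, and the subsequence principle. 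This buys a self-contained argument that makes explicit where uniqueness is used (both to identify the limit and to pass from subsequences to the full sequence), and it yields the bound on $\mnorm{w_P^*}$ directly from the optimal objective values rather than through the uniform boundedness principle as in the paper. Likewise, your kernel representation $(\Op L^\dagger w)(x) = \langle w, g_{\Op L}(x-\cdot)\rangle$ replaces the paper's appeal to sampling-admissibility (weak*-continuity of point evaluations) for the pointwise-convergence step; the two are equivalent here, but yours requires no external lemma.
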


\textit{Remark 1.} Despite the fact that the solutions to \eqref{eq:discrete_pb} may not be unique, Theorem~\ref{thm:unifconv} ensures that the convergence \eqref{eq:unifconvmoica} holds for any choice of the $f_P^*$. 

\textit{Remark 2.} Uniform convergence implies convergence with respect to any $L_p$ norm for $1 \leq p \leq \infty$, since we have $\Vert f \Vert_p \leq (2 \pi)^{1/p} \Vert f \Vert_\infty$ for any $f \in \Spc M_\Op L(\T)$.

\textit{Remark 3.}
Theorem~\ref{thm:unifconv} holds for more general settings than Problem~\eqref{eq:penalized_pb_quadratic}. More specifically, our proof seamlessly extends to the more general setting of Theorem~\ref{theo:main} for any cost functional $E$ that is continuous with respect to its second argument, such as $\ell_p$ losses of the form $E(\V y, \V z) = \Vert \V y - \V z \Vert_p^p$. Compared to the setting of Theorem~\ref{theo:main}, this notably excludes indicator functions, \ie the constrained optimization Problem~\eqref{eq:constrained_pb}. Concerning the regularization operator $\Op L$, Theorem~\ref{thm:unifconv} readily extends to any operator $\Op L$ such that $\Op L \{ 1 \}$ and whose periodic Green's function is Lipschitz. This notably excludes the case $\Op L = \Op D$, \ie $M=1$.

%We may consider more general data-fidelity cost functionals $E$. For our theoretical result in \eqref{eq:penalized_pb_quadratic}: our solver, the alternating direction method of multipliers (ADMM) \cite{boyd2010distributed}, only requires that $E$ be differentiable or proximable. 

\begin{proof}
We first introduce
%\begin{align}
   $\mathcal{M}_{0,\frac{2\pi}{P}(\T)} =
    \{ w \in \mathcal{M}_0(\T), \ w = \sum_{p=0}^{P-1} a[p] \Sha (\cdot -  \frac{2 \pi p}P) \}$, the uniform discretization of $\mathcal{M}_0(\T)$ using Dirac impulses. 
    %, \nonumber  \\
%    \mathcal{M}_{\mathrm{D}^M, \frac{2\pi}{P}}(\T) &= \Lop^\dagger \mathcal{M}_{0,\frac{2\pi}{P}}(\T) \oplus \mathrm{Span} \{1\}.
%\end{align}
Then, using Proposition~\ref{prop:y0forz0} (with a restriction of the search space which does not affect the proof), we have that $f_P^* = y_0 + \Lop^\dagger w_P^*$, where 
\begin{equation}
\label{eq:gridded_pb_measures}
    w_P^* \in 
    \underset{w \in \mathcal{M}_{0,\frac{2\pi}{P}}(\T)}{\arg \min}
    \frac{1}{2} \lVert \bm{y} - \bm{\nu}(\Lop^\dagger w + y_0) \rVert_2^2 + \lambda \lVert w \rVert_{\mathcal{M}}.
\end{equation}
     We now prove that the Radon measures $w_P^*$ converge towards the unique solution of
     \begin{equation}
     \label{eq:opti_measures}
       w^* =
    \underset{w \in \mathcal{M}_{0}(\T)}{\arg \min}
    \frac{1}{2} \lVert \bm{y} - \bm{\nu}(\Lop^\dagger w + y_0) \rVert_2^2 + \lambda \lVert w \rVert_{\mathcal{M}}
    \end{equation}
    for the weak* topology when $P\rightarrow \infty$, where the uniqueness of $w^*$ follows from \eqref{eq:reformulation_measures} in Proposition~\ref{prop:y0forz0}. This convergence is proved by following \cite[Proposition 4]{duval2017sparseI}; the fact that the search space in \eqref{eq:opti_measures} is $\Spc M_0(\T)$ rather than $\Spc M(\T)$ does not impact the proof.
    Then, the operator $\Op L^\dagger$ is linear and continuous between $\mathcal{M}_0(\T)$ and $\mathcal{M}_{\Op L}(\T)$ for their respective weak* topologies. This implies that $f_P^*$ converges to $f^*$ for the weak* topology over $\mathcal{M}_{\Op L}(\T)$. 
    According to~\cite[Proposition 9]{fageot2020tv}, $\Op L = \mathrm{D}^M$ is sampling-admissible for $M\geq 2$, which implies in particular that $\Sha$ is in the predual of $\mathcal{M}_{\Op L}(\T)$. Equivalently, this implies that $f \mapsto f(x)$ is weak*-continuous over $\mathcal{M}_{\Op L}(\T)$, which implies that $f_P^*(x) \rightarrow f^*(x)$ for any $x \in \T$ (pointwise convergence). 
    
   We now prove that the family $(f_P^*)_{P \in \N}$ is equicontinuous. Then, by~\cite[Theorem 15, Chapter 7]{kelley2017general}, pointwise and uniform convergences are equivalent, which will conclude the proof. Since $f_P^*$ is a $\Op L$-spline, using the expansion~\eqref{eq:spline_green}, we have
   \begin{equation}
   f_P^* = y_0 + \Op L^\dagger w_P^* = y_0 + \sum_{n=1}^{N_p} a_P[n] g_{\Op L}(\cdot - x_{P,n}), \quad x_{P,n} \in \left\{ \frac{2 \pi p}{P}, \ 0 \leq p \leq P-1 \right\}
   \end{equation}
   for some coefficients $a_P[n]$, $1 \leq n \leq N_p$, where $g_\Op L$ is the Green's function of $\Op L$ defined in~\eqref{eq:green_def}.
%    For any $x \in \T$, we have that
%    \begin{equation} \label{eq:unifbound}
%        |f_P^*(x)| \leq \sum_{n=1}^{N_p} |a_P[k]| |g_M(x-x_{P,n})| \leq \sum_{n=1}^{N_p} |a_P[k]| \lVert g_M \rVert_\infty = \mnorm{w_P^*} \lVert g_M \rVert_\infty.
%    \end{equation}
%    Moreover, we have seen thanks to~\cite{duval2017sparseI} that $(w_P^*)$ weak-* converges. It is therefore bounded for the total variation norm, thanks to the Uniform Boundedness Principle and the functions $f_P^*$ are uniformly bounded due to \eqref{eq:unifbound}
    Moreover, $g_\Op L$ is a periodic Lipschitz function for $\Op L = \Op D^M$ and $M \geq 2$, hence $\lVert g_\Op L \rVert_{\mathrm{Lip}} := \sup_{x,y \in \R, \ x \neq y} \frac{|g_\Op L(x) - g_\Op L(y)|}{|x-y|}  < \infty$.
    For any $x,y \in \R$, we have that
    \begin{align}
    \label{eq:uniform_lipschitz}
    |f_P^*(x) - f_P^*(y)| &\leq \sum_{n=1}^{N_p} |a_P[n]| |g_{\Op L}(x - x_{P,n}) - g_{\Op L}(y - x_{P,n})| \leq \left( \sum_{n=1}^{N_p} |a_P[n]| \right) \lVert g_\Op L \rVert_{\mathrm{Lip}} |x-y| \nonumber \\ &= \mnorm{w_P^*} \lVert g_\Op L \rVert_{\mathrm{Lip}}|x-y|.
    \end{align}
We have seen that $w_P^* \to w^*$ when $P \to \infty$ for the weak* topology. It is therefore bounded for the total-variation norm, thanks to the uniform boundedness principle.
We therefore deduce from \eqref{eq:uniform_lipschitz} that the $f_P^*$ are uniformly Lipschitz, and therefore equicontinuous, which proves the desired result.
\end{proof}

The first part of the proof of Theorem~\ref{thm:unifconv}, dealing with the pointwise convergence, mostly relies on the generalization of the weak* convergence studied in~\cite[Proposition 4]{duval2017sparseI}. Duval and Peyré use tools from $\Gamma$-convergence (see \cite{dal2012introduction} for an introduction) and are themselves inspired by~\cite{heinsnovel}.

Theorem~\ref{thm:unifconv} shows that our grid-based discretization yields spline solutions that are arbitrarily close to the unique solution $f^*$ of~\eqref{eq:penalized_pb_quadratic} in the uniform sense when the discretization step $h = \frac{2\pi}{P}$ vanishes. It leverages the uniqueness of the spline reconstruction from Fourier measurements ensured by Theorem \ref{theo:main}.

\section{B-spline-Based Algorithm}
\label{sec:algo}

We now introduce our proposed algorithm to solve the discretized Problem~\eqref{eq:discretized_continuous_pb} in an \emph{exact} way, \ie without any discretization error. The algorithm is based on \cite{Debarre2019} and uses the B-spline basis to represent the space of uniform splines $\MLh$. The main difference here with \cite{Debarre2019} is the periodic setting, which actually simplifies the treatment of the boundary conditions. Moreover, for the sake of conciseness, we focus here on discretizing for a fixed grid; we do not present the multiresolution aspect of the algorithm introduced in \cite{Debarre2019}, although it can seamlessly be adapted to our setting.

\subsection{Preliminaries on Uniform Periodic  Polynomial Splines}

A convenient feature of the space $\MLh$ is that it is generated by periodic B-splines, as will be proved in Proposition~\ref{prop:B-spline_representation}. To this end, we first provide some background information on B-splines and their periodized versions. B-splines are popular basis functions \cite{Schoenberg1973cardinal} that are widely used in signal processing applications \cite{de1978practical, Unser1999splines}, in part due to their short support which leads to well-conditioned optimization tasks. In the non-periodic setting, the scaled B-spline $\beta_{\Op L, h}$ of the operator $\Op L = \Op D^M$ with grid size $h > 0$ is characterized by its Fourier transform
\begin{align}
\widehat{\beta}_{\Op L, h}(\omega) = \frac{1}{h^{M-1}}  {\left( \frac{1 - \ee^{ \ii \omega h}}{\ii \omega} \right) }^M, \quad \forall \omega \in \R.
\end{align}
%where $\widehat{\beta_{n, h}}$ is the generalized Fourier transform of $\beta_{n, h}$
The scaled B-spline $\beta_{\Op L, h}$ is a piecewise polynomial of order $(M-1)$ with continuous $(M-2)$-th derivative, and is supported over the interval $[0, h M]$.

For any integer $P\geq 1$, the periodized $\Op L$ B-spline with grid size $h = \frac{2 \pi}{P}$ is then defined as 
\begin{align}
    \betaperN (x) = \sum_{k \in \Z} \beta_{\Op L, \frac{2\pi}P}(x - 2 \pi k), \quad \forall x \in \T, 
\end{align}
which is a converging sum due to the finite support of $\beta_{\Op L, \frac{2\pi}{P}}$. In fact, for $P \geq M$, the periodic B-spline is not aliased, since we have $\mathrm{Supp}(\beta_{\Op L, \frac{2\pi}P}) = [0, M \frac{2 \pi}{P}] \subset [0, 2\pi] = \TT$: we thus have $\betaperN = \beta_{\Op L, \frac{2\pi}P}$ in the interval $\TT$. Clearly, $\betaperN$ is $2 \pi$-periodic, and one readily shows from standard Fourier analysis that its Fourier series coefficients are given by
\begin{align}
    \betaperNhat [k] = \frac{1}{2 \pi} \widehat{\beta}_{\Op L, \frac{2\pi}P}(k) = P^{M-1}  {\left( \frac{1 - \ee^{ \ii k \frac{2\pi}P}}{2 \pi \ii k} \right) }^M.
\end{align}
Moreover, $\betaperN$ is a periodic $\Op L$-spline in the sense of Definition~\ref{def:Lspline}, and its innovation is given by
\begin{align}
\label{eq:innovation_B-spline}
    \Op L \betaperN = \left( \frac{P}{2 \pi} \right)^{M-1} \sum_{m=0}^M d_\Op L[m] \Sha (\cdot - m \frac{2 \pi}P),
\end{align}
where the $P$-periodic sequence $d_\Op L$ is characterized by its discrete Fourier transform (DFT) $D_\Op L[k] = (1 - \ee^{- \ii k \frac{2 \pi}P })^M$. %, and is thus supported in $\{0, \ldots , M\}$. 
This relation is easily verified in the Fourier domain. As an example, for $\Op L= \Op D$, $d_\Op L$ is the $P$-periodized finite-difference sequence $d_\Op L[k] = \delta_P[k] - \delta_P[k-1]$ where $\delta_P[k]$ is the $P$-periodized Kronecker delta sequence. As stated earlier, periodic B-splines share the same celebrated property as regular B-splines: they are generators of the space of uniform (periodic) splines $\MLh$ introduced in \eqref{eq:space_uniform_splines}.

\begin{proposition}
\label{prop:B-spline_representation}
The periodic B-spline $\betaperN$ is a generator of the space $\MLh$, \ie we have
\begin{align}
\label{eq:B-spline_representation}
    \MLh = \Big\{ f = \sum_{p=0}^{P-1} c[p] \betaperN \left(\cdot -  \frac{2 \pi p}P \right) , \ \V c = (c[0], \ldots , c[P-1]) \in \R^P \Big\}.
\end{align}
\end{proposition}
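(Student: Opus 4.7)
The plan is a dimension-counting argument. Writing $V$ for the right-hand side of \eqref{eq:B-spline_representation}, I will establish (a) $V \subseteq \MLh$, (b) $\dim_\R \MLh = P$, and (c) the parameterization map $\Phi : \R^P \to V$, $\V c \mapsto \sum_{p=0}^{P-1} c[p] \betaperN(\cdot - \frac{2 \pi p}{P})$, is injective. Together these will yield $P = \dim \R^P \leq \dim V \leq \dim \MLh = P$, forcing $V = \MLh$.

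Step (a) is routine: by linearity of $\Op L$ and translation invariance of the grid $\{\frac{2 \pi p}{P} : 0 \leq p \leq P-1\}$, applying $\Op L$ to $\Phi(\V c)$ and invoking the innovation formula \eqref{eq:innovation_B-spline} produces a finite sum of weighted Dirac impulses located on this grid, which is exactly the form required by \eqref{eq:space_uniform_splines}. For step (b), I would note that any $f \in \MLh$ is determined by the pair (innovation $\V a = (a[0], \ldots, a[P-1])$, constant $\widehat{f}[0]$), the null space of $\Op L = \Op D^M$ on $\Schp$ being $\Span\{1\}$. Since $\Op L$ kills constants, $\Op L f$ has vanishing Fourier mean, which forces $\sum_p a[p] = 0$ and leaves $P-1$ free coordinates in $\V a$; combined with the constant part, this yields $P$ real degrees of freedom.

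The crux is step (c), which I plan to attack in the Fourier domain. By the Fourier shift identity, $\widehat{\Phi(\V c)}[k] = \widehat{\betaperN}[k] \cdot \widehat{c}[k]$, where $\widehat{c}[k] = \sum_p c[p] \ee^{-\ii k \frac{2 \pi p}{P}}$ is $P$-periodic in $k$ (a length-$P$ DFT). Inspection of the explicit formula for $\widehat{\betaperN}[k]$ shows it is nonzero on a full period $k \in \{0, 1, \ldots, P-1\}$: for $k \in \{1, \ldots, P-1\}$ the numerator $1 - \ee^{\ii k 2 \pi/P}$ does not vanish, and a short limit computation at $k = 0$ gives $\widehat{\betaperN}[0] = (-1)^M/P \neq 0$. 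Therefore $\Phi(\V c) = 0$ forces $\widehat{c}[k] = 0$ on a full period, hence $\V c = 0$.

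The only mildly delicate point I anticipate is the limit computation showing $\widehat{\betaperN}[0] \neq 0$, without which a stray constant sequence could slip into the kernel of $\Phi$ and collapse the dimension count. Everything else is essentially linear-algebraic bookkeeping.
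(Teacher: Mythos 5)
Your proof is correct and follows essentially the same route as the paper's: establish the inclusion of the B-spline span in $\MLh$ via the innovation formula \eqref{eq:innovation_B-spline}, count $P$ degrees of freedom for $\MLh$ (the zero-sum constraint on $\V a$ plus the mean), and conclude by matching dimensions. You are in fact slightly more careful than the paper on one point: the paper simply asserts that the span of the $P$ shifted B-splines is $P$-dimensional, whereas your step (c) actually verifies the linear independence of the shifts through the non-vanishing of $\widehat{\betaperN}[k]$ over a full DFT period, including the correct limit value $(-1)^M/P$ at $k=0$.
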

\begin{proof}
We first observe that the space $\MLh$ is a $P$-dimensional vector space: there are $(P-1)$ degrees of freedom for the $a[p]$ coefficients in \eqref{eq:space_uniform_splines} ($P$ coefficients and one linear constraint $\sum_{p=0}^{P-1} a[p] = 0$), and one for the mean $\widehat{f}[0]$ (see Proposition~\ref{prop:periodicTVused}). Next, we prove that for any $\V c = (c[0], \ldots , c[P-1]) \in \R^P$, we have $f = \sum_{p=0}^{P-1} c[p] \betaperN(\cdot -  \frac{2 \pi p}P) \in \MLh$. Indeed, we have
\begin{align}
\label{eq:innovation_uniform_spline}
    \Op L f &= \left( \frac{P}{2\pi} \right)^{M-1} \sum_{p=0}^{P-1} \sum_{m = 0}^M c[p] d_\Op L[m] \Sha \left(\cdot - (p+m) \frac{2 \pi}P \right) \nonumber \\
    &= \left( \frac{P}{2\pi} \right)^{M-1} \sum_{p=0}^{P-1} (\V d_\Op L \ast \V c)[p] \Sha \left(\cdot -  \frac{2 \pi p}P \right),
\end{align}
where \eqref{eq:innovation_B-spline} was used for the first line, and $(\V d_\Op L \ast \V c)$ denotes here the cyclic convolution between the vectors $\V d_\Op L = (d_\Op L[0], \ldots , d_\Op L[P-1])$ and $\V c$. This proves that $f \in \MLh$ with coefficients $(a[0], \ldots , a[P-1]) = \left( \frac{P}{2\pi} \right)^{M-1} (\V d_\Op L \ast \V c)$, and thus that the space generated by shifts of $\betaperN$ is included in $\MLh$. Yet both are $P$-dimensional vector spaces, which proves that they are in fact equal.
\end{proof}

\subsection{Discrete Problem Formulation}

In practice, to solve Problem~\eqref{eq:discretized_continuous_pb}, we use the B-spline representation~\eqref{eq:B-spline_representation} of $\MLh$. The choice of the B-spline representation is guided by numerical considerations: B-splines have the shortest support among any uniform $\Op L$-spline, and thus lead to well-conditioned optimization tasks. The problem thus consists in optimizing over the $c[0], \ldots , c[P-1]$ coefficients, which leads to a computationally feasible finite-dimensional problem, as demonstrated in the following proposition.
\begin{proposition}
Problem~\eqref{eq:discretized_continuous_pb} is exactly equivalent to solving the finite-dimensional problem
\begin{align}
\label{eq:discrete_pb}
\Spc W_{\lambda, P}(\V y) = \argmin_{\V c \in \R^P}   \frac{1}{2} \Vert \M H \V c - \V y \Vert_2^2 + \lambda \left( \frac{P}{2\pi} \right)^{M-1} \Vert \V d_\Op L \ast \V c \Vert_1,
\end{align}
where the matrix $\M H \in \C^{(K_c + 1) \times P}$ is given by
$ \M H_{k, \ell} = \nu_k \left(\betaperN(\cdot - \ell \frac{2 \pi}{P}) \right) = \ee^{- \ii \ell \frac{2 \pi}P } \betaperNhat[k]$, $\V d_\Op L = (d_\Op L[0], \ldots , d_\Op L[P-1])$, and $\V c = (c[0], \ldots , c[P-1])$.
The continuous-domain reconstructed signal is then $f = \sum_{p=0}^{P-1} c^\ast_p \betaperN(\cdot -  \frac{2 \pi p}P)$, where $\V c^\ast \in \Spc W_{\lambda, P}(\V y)$.
\end{proposition}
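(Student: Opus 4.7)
The proof is essentially a substitution argument using the B-spline parametrization established in Proposition~\ref{prop:B-spline_representation}. The plan is to write an arbitrary $f \in \MLh$ as $f = \sum_{p=0}^{P-1} c[p] \betaperN(\cdot - \frac{2\pi p}{P})$ with $\V c \in \R^P$, then rewrite both the data-fidelity term and the regularization term as explicit functions of $\V c$, and finally invoke the bijectivity of the parametrization to transfer the optimization.

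For the data-fidelity term, I would use linearity of the measurement operator $\V\nu$ together with the Fourier-shift identity $\widehat{g(\cdot - x_0)}[k] = \ee^{-\ii k x_0} \widehat{g}[k]$. This directly yields $\nu_k(f) = \sum_{\ell=0}^{P-1} c[\ell]\, \ee^{-\ii \ell \frac{2\pi k}{P}}\, \betaperNhat[k] = (\M H \V c)_k$, so the first term becomes $\frac{1}{2}\|\M H\V c - \V y\|_2^2$.

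For the regularization term, I would invoke equation~\eqref{eq:innovation_uniform_spline}, which gives $\Op L f = (P/2\pi)^{M-1} \sum_{p=0}^{P-1} (\V d_\Op L \ast \V c)[p]\, \Sha(\cdot - \frac{2\pi p}{P})$. The key observation is that this is a sum of weighted Dirac combs at the $P$ \emph{distinct} grid points of $\T$, which makes the TV norm computation straightforward: for any periodic measure $w = \sum_{p=0}^{P-1} b[p]\,\Sha(\cdot - \frac{2\pi p}{P})$ supported on distinct points, we have $\|w\|_\Spc M = \sum_{p=0}^{P-1} |b[p]|$ by definition of the TV norm. Applying this with $b[p] = (P/2\pi)^{M-1} (\V d_\Op L \ast \V c)[p]$ yields $\|\Op L f\|_\Spc M = (P/2\pi)^{M-1} \|\V d_\Op L \ast \V c\|_1$.

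The final step is to note that the map $\V c \mapsto \sum_{p=0}^{P-1} c[p]\betaperN(\cdot - \frac{2\pi p}{P})$ is a bijection from $\R^P$ onto $\MLh$, since Proposition~\ref{prop:B-spline_representation} shows this map is surjective and both spaces have dimension $P$. Hence minimizing over $f \in \MLh$ is equivalent to minimizing over $\V c \in \R^P$ with the cost functions matched as computed above, yielding the announced equivalence between $\Spc V_{\lambda,P}(\V y)$ and $\Spc W_{\lambda,P}(\V y)$. I do not anticipate any serious obstacle: the only place where one must be careful is asserting the identity $\|w\|_\Spc M = \|\V b\|_1$ for discrete measures on the uniform grid, which relies on the points being pairwise distinct (guaranteed by $P \geq 1$ and the uniform spacing on $\T$).
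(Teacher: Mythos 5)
Your proposal is correct and follows essentially the same route as the paper's proof: plug the B-spline parametrization of $\MLh$ into the cost functional, derive the system matrix from the Fourier-shift property, and compute the regularization term via \eqref{eq:innovation_uniform_spline} together with the identity $\Vert \sum_{p} a_p \Sha(\cdot - x_p) \Vert_{\Spc M} = \Vert \V a \Vert_1$ for pairwise-distinct knots. The only addition is your explicit remark on the bijectivity of the parametrization, which the paper leaves implicit in Proposition~\ref{prop:B-spline_representation}.
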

\begin{proof}
This equivalence is obtained by plugging in $f = \sum_{p=1}^P c^\ast_p \betaperN(\cdot -  \frac{2 \pi p}P) \in \MLh$ into the cost function of problem~\eqref{eq:penalized_pb_quadratic}. The expression of the system matrix $\M H$ immediately follows. The expression of the regularization term follows from \eqref{eq:innovation_uniform_spline} and the fact that $\Vert \sum_{p=0}^{P-1} a_p \Sha (\cdot - x_p) \Vert_\Spc M = \Vert \V a \Vert_1$ for pairwise-distinct knot locations $x_k$.
\end{proof}

Problem~\eqref{eq:discrete_pb} is a standard discrete problem with $\ell_1$ regularization, and a solution to the latter can be reached using proximal solvers such as ADMM \cite{boyd2010distributed}. 

\section{Experiments}\label{sec:experiments}

In this section, we present some results of our discretization method presented in the previous section in various experimental settings.

\subsection{Effect of Gridding}

\subsubsection{Qualitative effect}
We first present a toy experiment to illustrate the effect of gridding in our discretization method, \ie restricting the search space to $\MLh$. We therefore design an experiment in which the solution of the problem is known, in order to observe whether our algorithm is able to reconstruct it. To this end, we take $\Op L = \Op D^2$, and generate a ground-truth signal $f_0$ which is a periodic $\Op D^2$-spline with 2 knots (the locations and amplitudes of the knots are picked at random). We then compute the noiseless data vector $\V y = \V \nu(f_0)$ for $K_c = 3$, and solve the corresponding problem~\eqref{eq:penalized_pb_quadratic} with a small regularization parameter $\lambda = 10^{-7}$ in order to enforce the constraints $\V \nu(f) \approx \V y$ with very low error. Since the form of $f_0$ is compatible with that of the solution given by Theorem~\ref{theo:main}, the hope is that $f_0$ will be very close to the solution $f^\ast$ to problem~\eqref{eq:penalized_pb_quadratic}, which is confirmed by our experiments.

%Using the so-called vanishing derivatives precertificate \cite[Definition 6]{denoyelle2019sliding}, we can in fact prove that $f_0$ is a solution of the constrained problem~\eqref{eq:Lyf}, and thus that the solution to the penalized problem~\eqref{eq:penalized_pb_quadratic} is very close to $f_0$, and most likely has two knots as well.

In Figure~\ref{fig:P=16}, we show the reconstruction result of our algorithm, using a voluntarily coarse grid with $P=16$ points for visualization purposes. We observe that since the knot of $f_0$ are quite far from the grid, it is difficult to approximate $f_0$ with an element of $\MLh$. The reconstruction therefore requires several knots on the grid to mimic a single knot of $f_0$, and thus has a much higher sparsity ($N=7$ knots versus $N=2$ for $f_0$).

\begin{figure}[t]
\centering
\subfloat[Reconstruction with $P=16$ grid points (sparsity: $N = 7$ knots).]{\includegraphics[width=.5\linewidth]{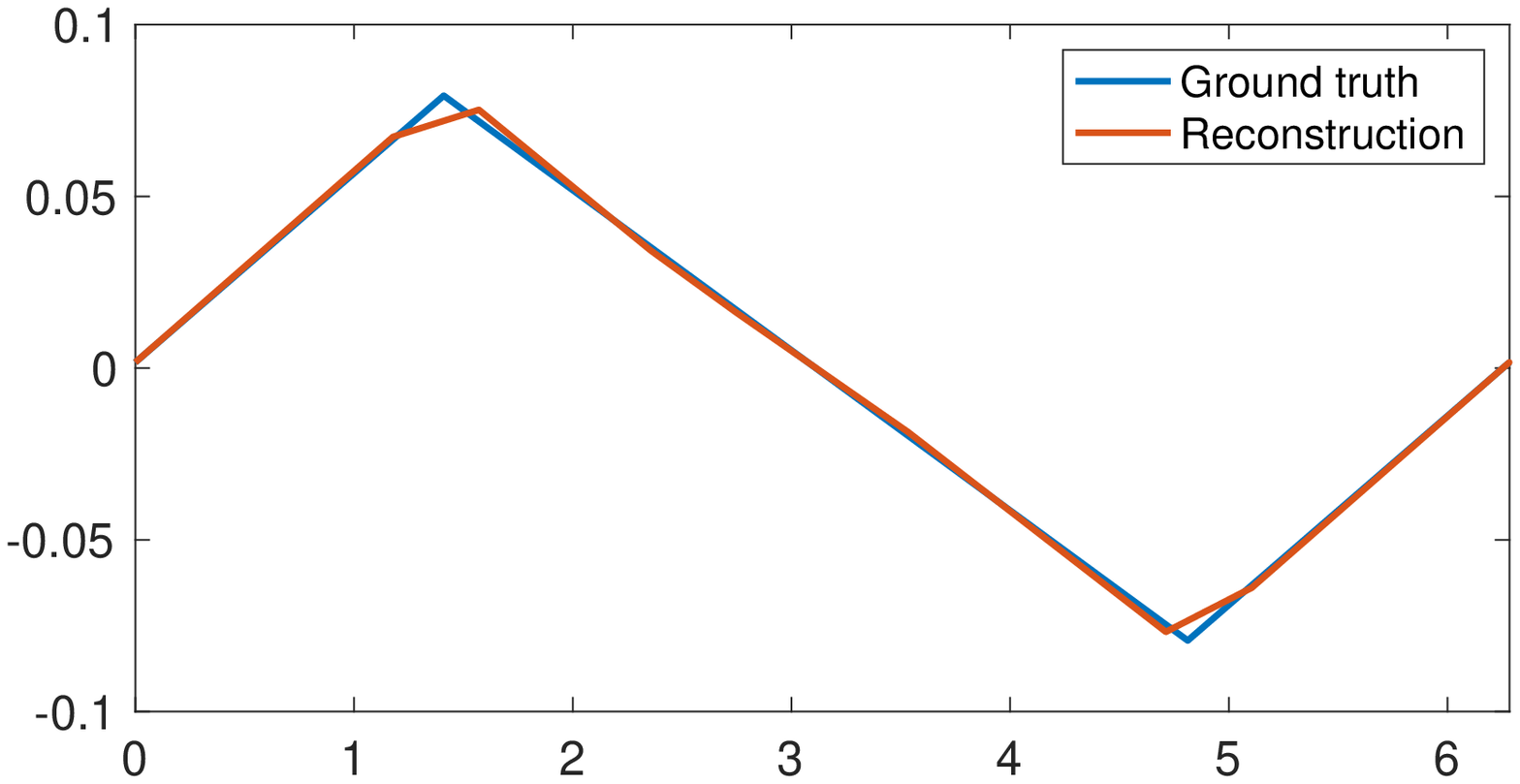} \label{fig:P=16}}
\subfloat[Reconstructions with different grid sizes around the knot at 1.41 ]{\includegraphics[width=.5\linewidth]{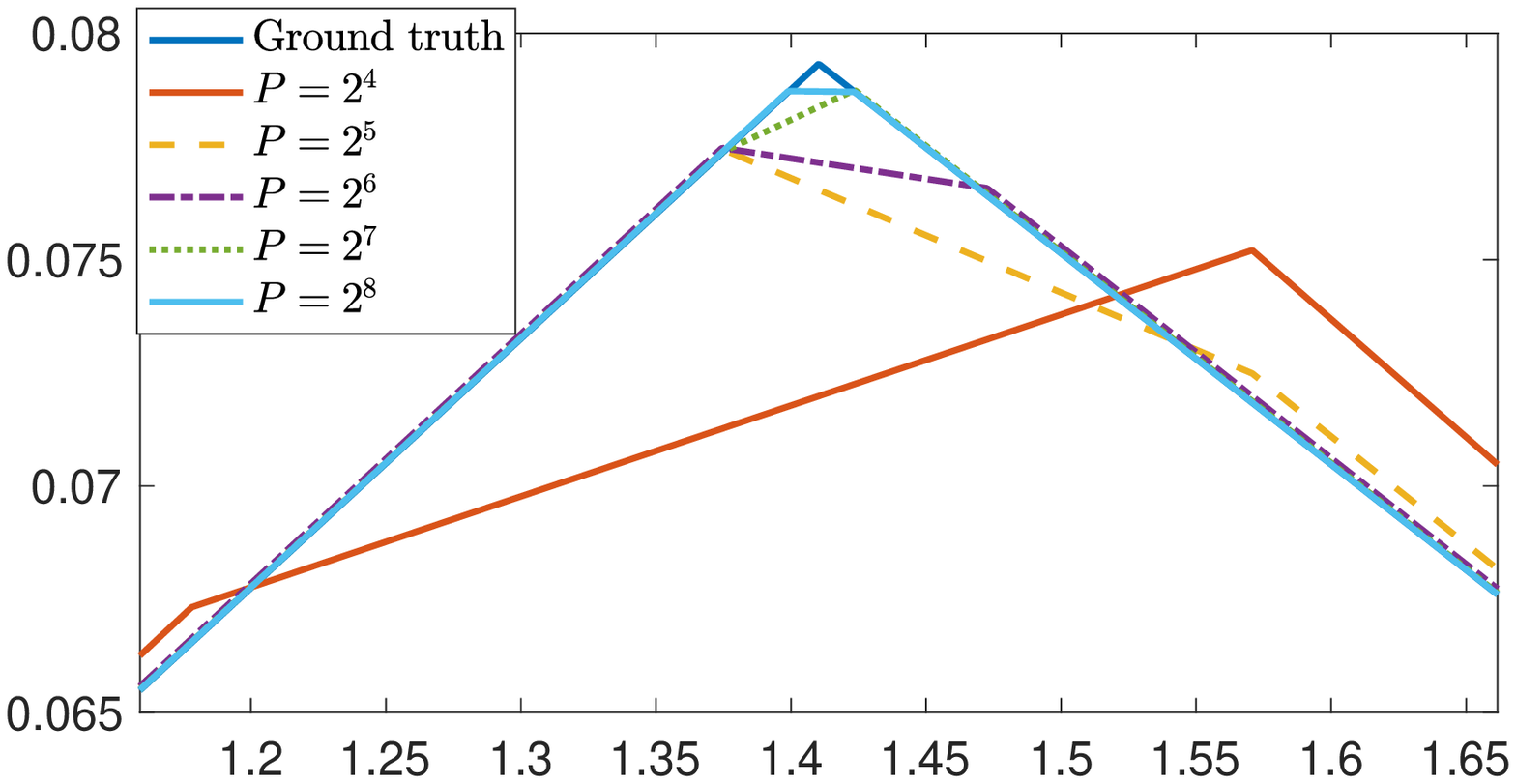} \label{fig:zoom_knot}}
\caption{Noiseless reconstruction of a piecewise-linear spline with $N = 2$ knots, $K_c = 3$, and $\lambda = 10^{-7}$.}
\label{fig:2knots}
\end{figure}

However, as we increase the number of grid points, the effect of gridding is greatly reduced, as illustrated in Figure~\ref{fig:zoom_knot}: with $P=512$, the reconstruction using our algorithm is visually indistinguishable from $f_0$ (which is why we do not show it). However, the knot locations of $f_0$ still do not exactly lie on the grid, and thus our reconstruction still requires multiple knots to mimic a single knot of $f_0$, which leads to a sparsity of $N=4$. Specifically, our reconstruction has two knots at consecutive grid points 1.3990 and 1.4113 mimicking the knot at 1.4103 of $f_0$, and two knots at 4.8106 and 4.8228 mimicking the knot at 4.8122 of $f_0$. This effect of knot multiplication due to gridding has already been observed and studied extensively in \cite{duval2017sparseI} in the absence of a regularization operator $\Op L$.

The conclusion is thus that gridding leads to visually near-perfect reconstruction when the number of grid points is very large, which is in line with Theorem~\ref{thm:unifconv}; however, the sparsity of the reconstruction is a poor indicator of the sparsity of the true solution of Problem~\eqref{eq:penalized_pb_quadratic}, since gridding induces clusters of knots.

\subsubsection{Quantitative effect}
In Theorem~\ref{thm:unifconv}, we have proved that any sequence of continuous-domain solutions $f^\ast_P$ to the grid-restricted problem converges uniformly towards the unique solution $f^\ast$ of problem~\eqref{eq:penalized_pb_quadratic} when $P$ goes to infinity. In order to quantify the speed of this convergence, using the same experimental setting as in Figure~\ref{fig:2knots}, we compute the error $\Vert f^\ast_P - f_0 \Vert_\infty$ where $f^\ast_P$ is the reconstructed signal using our grid-based algorithm, and the ground truth $f_0$ is a proxy for the solution $f^\ast$ to problem~\eqref{eq:penalized_pb_quadratic}. As explained earlier, this is a reasonable proxy due to the very small regularization parameter $\lambda = 10^{-7}$. In order to limit the effect of randomness in the choice of the knots of the ground truth, we apply a Monte Carlo-type method by generating 100 different ground truth signals (following the methodology described in the previous section) and averaging the error over these 100 runs. These average errors for different grid sizes are shown in Figure~\ref{fig:Linf_MC}. The trend appears to be linear in log-log scale, which indicates an empirical speed of convergence of $\Vert f^\ast_P - f_0\Vert_\infty \approx(\frac{C}{P^s})$ for some constant $C > 0$ and where $-s < 0$ is the slope of the linear function. We observe here that $s \approx 1$ with $s<1$. This is consistent with classical approximation theory results, since the approximation power of linear splines with grid size $h$ is in $\mathcal{O}(h)$ for the supremum norm, which corresponds to $s=1$. There is therefore no hope of having $s > 1$; our observation $s < 1$ can likely be attributed to the fact that we use $f_0$ as a proxy for $f^\ast$ and to increased numerical issues when the grid size decreases.

\begin{figure}[t]
\centering
\includegraphics[width=.5\linewidth]{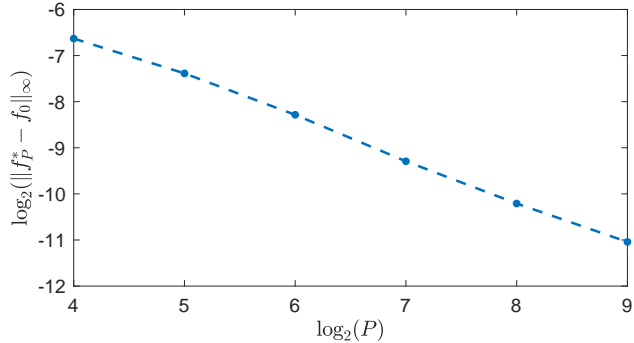}
\caption{Average error $\Vert f^\ast_P - f_0\Vert_\infty$ over 100 runs for different grid sizes $P$ (in log-log scale).}
\label{fig:Linf_MC}
\end{figure}

\subsection{Noisy Recovery of Sparse Splines}

In our next experiment, we attempt to recover a ground-truth $f_0$ signal based on noisy data $\V y$ with a regularization operator $\Op L = \Op D$. Once again, the ground-truth signal fits the signal model of problem~\eqref{eq:penalized_pb_quadratic}, \ie $f_0$ is a periodic $\Op D$-spline (piecewise constant signal) with $N=7$ knots. Each knot $x_n$ is chosen at random within consecutive intervals of length $\frac{2 \pi}{7}$, and the vector of amplitudes $\V a = (a_1, \ldots , a_n)$ is an i.i.d. Gaussian random vector projected on the space of zero-mean vectors.  The measurements are corrupted by some additive i.i.d. Gaussian noise\footnote{For complex entries, both the real and imaginary parts are i.i.d. Gaussian variables with the same $\sigma$.} $\V n \in \R \times \C^{K_c}$ with standard deviation $\sigma = 10^{-3}$, \ie $\V y = \V \nu (f_0) + \V n$.

\begin{figure}[t]
\centering
\includegraphics[width=\linewidth]{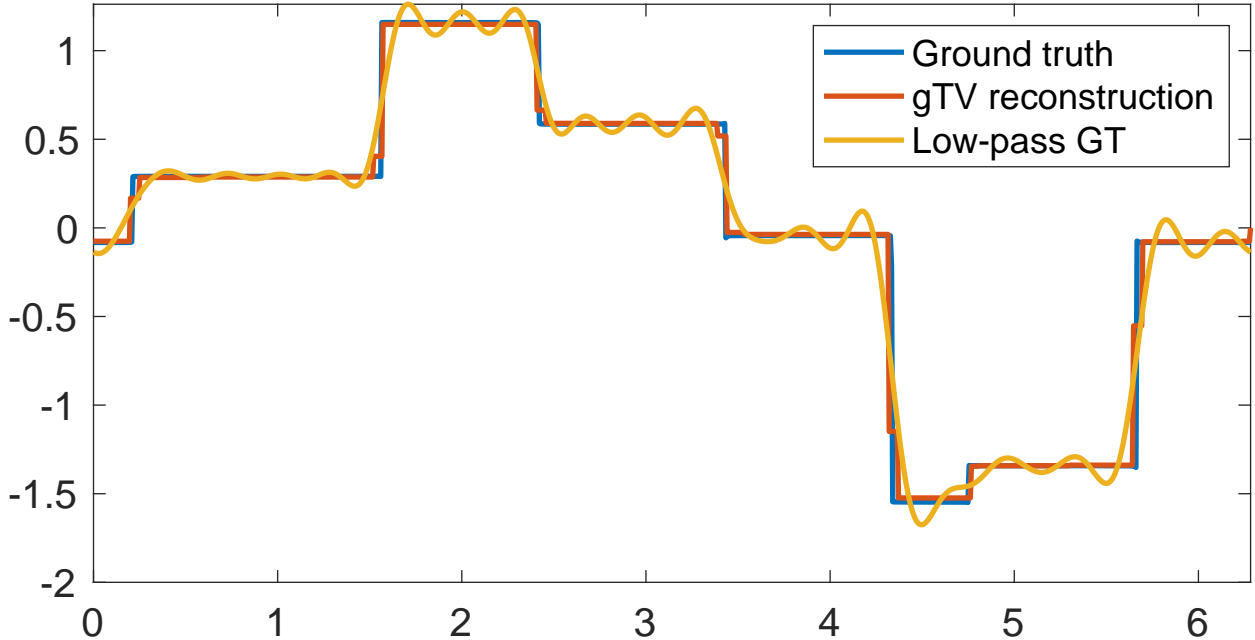}
\caption{Recovery of piecewise-constant spline with 7 knots with $K_c = 20$. For the our reconstruction (gTV), we use $\lambda = 10^{-2}$ and $P=256$ grid points; the sparsity of the reconstruction is $N=20$ knots. The data $\V y$ is noisy in the gTV case, whereas the noiseless data $\V \nu(f_0)$ is used for the low-pass reconstruction (partial Fourier series of the ground truth signal up to the cutoff frequency $K_c$). }
\label{fig:l2-comparison}
\end{figure}

The reconstructed signal using our algorithm is shown in Figure~\ref{fig:l2-comparison}. Despite the presence of noise, the reconstruction of the ground truth $f_0$ is almost perfect. As observed in the previous experiment, the sparsity of the reconstruction ($N=20$) is higher than that of the ground truth ($N=7$) due to clusters of knots. We compare our reconstruction to the truncated Fourier series of $f_0$ up the $K_c$, \ie $f_{K_c} = \sum_{k = -K_c}^{K_c} \widehat{f}_0[k] e_k$, which solely depends on the noiseless data vector $\V \nu(f_0)$. Without any prior knowledge, this is the simplest reconstruction one can think of based the available data $\V \nu(f_0) = (\widehat{f}_0[0], \ldots , \widehat{f}_0[K_c]$). As it turns out, $f_{K_c}$ is also the unique solution to the following constrained $L_2$-regularized problem
\begin{align}
\label{eq:constrained_L2}
    f_{K_c} = \argmin_{f: \V \nu(f) = \V \nu(f_0)} \Vert f \Vert_{L_2},
\end{align}
as demonstrated in~\cite[Theorem 3]{gupta2018continuous}. In fact, adding any LSI regularization operator $\Op L$ in~\eqref{eq:constrained_L2} still yields the same solution, since the basis functions $\varphi_m$ in~\cite[Theorem 3]{gupta2018continuous} span the same space. This is due to the fact that the measurement functionals $\nu_m$, \ie complex exponentials, are eigenfunctions of LSI operators.

As expected from the fact that $f_{K_c}$ is a trigonometric polynomial whereas $f_0$ has sharps jumps, the reconstruction is quite poor and exhibits Gibbs-like oscillations, despite the absence of noise. This clearly demonstrates the superiority of gTV over $L_2$ regularization for sparse periodic splines reconstruction. Note however that the gap in performance decreases as the order of $\Op L = \Op D^M$ increases, since Gibbs-like phenomena are less significant for smoother functions.

\section{Conclusion}
\label{sec:conclusion}
    
This paper deals with continuous-domain inverse problems, where the goal is to recover a periodic function from its low-pass measurements. The reconstruction task is formalized as an optimization problem with a TV-based regularization involving a high-order derivative operator. It was known that spline solutions always exist (representer theorem). Our main result has proved that the solution is in fact always unique. %, which is to the best of our knowledge the first result of the kind for TV-based inverse problems.
%{\color{blue}Moreover, we provided a new  sliding Frank-Wolfe algorithm reaching this unique solution, while proving its main theoretical properties and showing its practical relevance on simulations.}
We then studied the grid-based discretization of our optimization problem. We leveraged our uniqueness result to that any sequence of solutions of the discretized problems converge in uniform norm --- a remarkably strong form of convergence --- to the solution of the original problem when the grid size vanishes. Finally, we proposed a B-spline-based algorithm to solve the discretized problem, and we illustrated the relevance of our approach on simulations.

\section*{Acknowledgments}
The authors thank Shayan Aziznejad, Adrian Jarret, Matthieu Simeoni, and Michael Unser for interesting discussions.
Julien Fageot is supported by the Swiss National Science Foundation (SNSF) under Grant P400P2\_194364. The work of Thomas Debarre is supported by the SNSF under Grant 200020\_184646 / 1. Quentin Denoyelle is supported by the  European Research Council (ERC) under Grant 692726-GlobalBioIm.

% !TEX root = ../main.tex

 %\appendix 

% \section{Proof of Proposition \ref{prop:invertibleelipticL}}
% \label{app:tvboundcoeffs}

%         The Fourier sequence of $\Lop$ is also the one of $\Lop\{\Sha\} \in \Schp$, which is bounded by a polynomial (as any Fourier sequence of a periodic generalized function; see~\cite[Chapter VII]{Schwartz1966distributions}. This implies the existence of $\beta$ in~\eqref{eq:loweruperelliptic}. 
        
%         Next, by the same reasoning, the Fourier sequence of the pseudoinverse operator $\Lop^\dagger$ is also bounded by a polynomial. Yet for any $k \in \Z \setminus \{0 \}$, we have $\widehat{L}^\dagger[k] = 1/\widehat{L}[k]$, which implies the existence of $\alpha$ in~\eqref{eq:loweruperelliptic}. The inequality remains valid for $k = 0$, since $\widehat{L}^\dagger[0] = 0$.
        
%         Finally, we have $\Lop f = 0$ if and only if $\widehat{L}[k] \widehat{f}[k] = 0$ for any $k \in \Z$. For an elliptic operator $\Lop$, due to~\eqref{eq:loweruperelliptic}, this is equivalent to $\widehat{f}[k] = 0$ for any $k \neq 0$. Hence, the null space of $\Lop$ is $\Span \{1\}$.

{\footnotesize
\bibliographystyle{IEEEtran}
\bibliography{ms}}

% Generated by IEEEtran.bst, version: 1.14 (2015/08/26)
\begin{thebibliography}{10}
\providecommand{\url}[1]{#1}
\csname url@samestyle\endcsname
\providecommand{\newblock}{\relax}
\providecommand{\bibinfo}[2]{#2}
\providecommand{\BIBentrySTDinterwordspacing}{\spaceskip=0pt\relax}
\providecommand{\BIBentryALTinterwordstretchfactor}{4}
\providecommand{\BIBentryALTinterwordspacing}{\spaceskip=\fontdimen2\font plus
\BIBentryALTinterwordstretchfactor\fontdimen3\font minus
  \fontdimen4\font\relax}
\providecommand{\BIBforeignlanguage}[2]{{%
\expandafter\ifx\csname l@#1\endcsname\relax
\typeout{** WARNING: IEEEtran.bst: No hyphenation pattern has been}%
\typeout{** loaded for the language `#1'. Using the pattern for}%
\typeout{** the default language instead.}%
\else
\language=\csname l@#1\endcsname
\fi
#2}}
\providecommand{\BIBdecl}{\relax}
\BIBdecl

\bibitem{Candes2013Super}
E.~Cand{\`e}s and C.~Fernandez-Granda, ``Super-resolution from noisy data,''
  \emph{Journal of Fourier Analysis and Applications}, 2013.

\bibitem{Bredies2013inverse}
K.~Bredies and H.~Pikkarainen, ``Inverse problems in spaces of measures,''
  \emph{ESAIM: Control, Optimisation and Calculus of Variations}, vol.~19,
  no.~01, pp. 190--218, 2013.

\bibitem{Duval2015exact}
V.~Duval and G.~Peyr{\'e}, ``Exact support recovery for sparse spikes
  deconvolution,'' \emph{Foundations of Computational Mathematics}, vol.~15,
  no.~5, pp. 1315--1355, 2015.

\bibitem{Unser2017splines}
M.~Unser, J.~Fageot, and J.~P. Ward, ``Splines are universal solutions of
  linear inverse problems with generalized {TV} regularization,'' \emph{{SIAM}
  Review}, vol.~59, no.~4, pp. 769--793, 2017.

\bibitem{boyd2017alternating}
N.~Boyd, G.~Schiebinger, and B.~Recht, ``The alternating descent conditional
  gradient method for sparse inverse problems,'' \emph{SIAM Journal on
  Optimization}, vol.~27, no.~2, pp. 616--639, 2017.

\bibitem{denoyelle2019sliding}
Q.~Denoyelle, V.~Duval, G.~Peyr{\'e}, and E.~Soubies, ``The sliding
  {F}rank-{W}olfe algorithm and its application to super-resolution
  microscopy,'' \emph{Inverse Problems}, 2019.

\bibitem{flinth2019linear}
A.~Flinth, F.~de~Gournay, and P.~Weiss, ``On the linear convergence rates of
  exchange and continuous methods for total variation minimization,''
  \emph{Mathematical Programming}, pp. 1--37, 2020.

\bibitem{courbot2019sparse}
J.-B. Courbot, V.~Duval, and B.~Legras, ``Sparse analysis for mesoscale
  convective systems tracking,'' \emph{Signal Processing: Image Communication},
  vol.~85, p. 115854, 2020.

\bibitem{simeoni2020functional}
M.~Simeoni, ``Functional inverse problems on spheres: Theory, algorithms and
  applications,'' Ph.D. dissertation, {S}wiss {F}ederal {I}nstitute of
  {T}echnology {L}ausanne ({EPFL}), 2020.

\bibitem{courbot2020fast}
J.~Courbot and B.~Colicchio, ``A fast homotopy algorithm for gridless sparse
  recovery,'' \emph{Inverse Problems}, vol.~37, no.~2, p. 025002, 2021.

\bibitem{rockafellar1970convex}
R.~Rockafellar, \emph{Convex Analysis}.\hskip 1em plus 0.5em minus 0.4em\relax
  Princeton University Press, 1970.

\bibitem{badoual2018periodic}
A.~Badoual, J.~Fageot, and M.~Unser, ``Periodic splines and {G}aussian
  processes for the resolution of linear inverse problems,'' \emph{{IEEE}
  Transactions on Signal Processing}, vol.~66, no.~22, pp. 6047--6061, 2018.

\bibitem{debarre2022uniqueness}
T.~Debarre, Q.~Denoyelle, and J.~Fageot, ``On the uniqueness of solutions for
  the basis pursuit in the continuum,'' \emph{arXiv preprint arXiv:2009.11855},
  Feb. 2022.

\bibitem{fageot2020tv}
J.~Fageot and M.~Simeoni, ``{TV}-based reconstruction of periodic functions,''
  \emph{Inverse Problems}, vol.~36, no.~11, p. 115015, 2020.

\bibitem{fisher1975spline}
S.~Fisher and J.~Jerome, ``Spline solutions to ${L}^1$ extremal problems in one
  and several variables,'' \emph{Journal of Approximation Theory}, vol.~13,
  no.~1, pp. 73--83, 1975.

\bibitem{Debarre2019}
T.~Debarre, J.~Fageot, H.~Gupta, and M.~Unser, ``B-spline-based exact
  discretization of continuous-domain inverse problems with generalized {TV}
  regularization,'' \emph{IEEE Transactions on Information Theory}, 2019.

\bibitem{duval2017sparseI}
V.~Duval and G.~Peyr{\'e}, ``Sparse regularization on thin grids {I}: the
  {LASSO},'' \emph{Inverse Problems}, vol.~33, no.~5, p. 055008, 2017.

\bibitem{de2012exact}
Y.~{de Castro} and F.~Gamboa, ``Exact reconstruction using {B}eurling minimal
  extrapolation,'' \emph{Journal of Mathematical Analysis and applications},
  vol. 395, no.~1, pp. 336--354, 2012.

\bibitem{candes2014towards}
E.~Cand{\`e}s and C.~Fernandez-Granda, ``Towards a mathematical theory of
  super-resolution,'' \emph{Communications on Pure and Applied Mathematics},
  vol.~67, no.~6, pp. 906--956, 2014.

\bibitem{Azais2015Spike}
J.~Azais, Y.~{de Castro}, and F.~Gamboa, ``Spike detection from inaccurate
  samplings,'' \emph{Applied and Computational Harmonic Analysis}, 2015.

\bibitem{duval2017sparseII}
V.~Duval and G.~Peyr{\'e}, ``Sparse spikes super-resolution on thin grids {II}:
  the continuous basis pursuit,'' \emph{Inverse Problems}, vol.~33, no.~9, p.
  095008, 2017.

\bibitem{poon2019support}
C.~Poon, N.~Keriven, and G.~Peyr{\'e}, ``Support localization and the {F}isher
  metric for off-the-grid sparse regularization,'' in \emph{The 22nd
  International Conference on Artificial Intelligence and Statistics}, 2019.

\bibitem{gupta2018continuous}
H.~Gupta, J.~Fageot, and M.~Unser, ``Continuous-domain solutions of linear
  inverse problems with {T}ikhonov \textit{vs.} generalized {TV}
  regularization,'' \emph{IEEE Transactions on Signal Processing}, vol.~66,
  no.~17, pp. 4670--4684, 2018.

\bibitem{aziznejad2021multikernel}
S.~Aziznejad and M.~Unser, ``Multikernel regression with sparsity constraint,''
  \emph{{SIAM} Journal on Mathematics of Data Science}, vol.~3, no.~1, pp.
  201--224, 2021.

\bibitem{boyer2019representer}
C.~Boyer, A.~Chambolle, Y.~{de Castro}, V.~Duval, F.~D. Gournay, and P.~Weiss,
  ``On representer theorems and convex regularization,'' \emph{SIAM Journal on
  Optimization}, vol.~29, no.~2, pp. 1260--1281, 2019.

\bibitem{bredies2019sparsity}
K.~Bredies and M.~Carioni, ``Sparsity of solutions for variational inverse
  problems with finite-dimensional data,'' \emph{Calculus of Variations and
  Partial Differential Equations}, vol.~59, no.~1, 2019.

\bibitem{flinth2019exact}
A.~Flinth and P.~Weiss, ``Exact solutions of infinite dimensional
  total-variation regularized problems,'' \emph{Information and Inference: A
  Journal of the {IMA}}, vol.~8, no.~3, pp. 407--443, 2019.

\bibitem{Unser2019native}
M.~Unser and J.~Fageot, ``Native {B}anach spaces for splines and variational
  inverse problems,'' \emph{arXiv preprint arXiv:1904.10818}, 2019.

\bibitem{filbirsuper}
F.~Filbir, K.~Schr{\"o}der, and A.~Veselovska, ``Super-resolution on the
  two-dimensional unit sphere,'' \emph{arXiv preprint arXiv:2107.10762}, 2021.

\bibitem{debarre2020sparsest}
T.~Debarre, Q.~Denoyelle, M.~Unser, and J.~Fageot, ``Sparsest piecewise-linear
  regression of one-dimensional data,'' \emph{Journal of Computational and
  Applied Mathematics}, p. 114044, 2021.

\bibitem{aubel2018theory}
C.~Aubel, D.~Stotz, and H.~B{\"o}lcskei, ``A theory of super-resolution from
  short-time {F}ourier transform measurements,'' \emph{Journal of Fourier
  Analysis and Applications}, vol.~24, no.~1, pp. 45--107, 2018.

\bibitem{debarre2019hybrid}
T.~Debarre, S.~Aziznejad, and M.~Unser, ``Hybrid-spline dictionaries for
  continuous-domain inverse problems,'' \emph{IEEE Transactions on Signal
  Processing}, vol.~67, no.~22, pp. 5824--5836, 2019.

\bibitem{debarre2021continuous}
T.~Debarre, S.~Shayan~Aziznejad, and M.~Unser, ``Continuous-domain formulation
  of inverse problems for composite sparse-plus-smooth signals,'' \emph{{IEEE}
  Open Journal of Signal Processing}, vol.~2, pp. 545--558, September 29, 2021.

\bibitem{llorensjover2022coupled}
I.~LLoréns~Jover, T.~Debarre, S.~Aziznejad, and M.~Unser, ``Coupled splines
  for sparse curve fitting,'' \emph{arXiv preprint arXiv:2202.01641}, 2022.

\bibitem{gray1984shaping}
J.~Gray, ``The shaping of the {R}iesz representation theorem: {A} chapter in
  the history of analysis,'' \emph{Archive for History of Exact Sciences},
  vol.~31, no.~2, pp. 127--187, 1984.

\bibitem{kelley2017general}
J.~L. Kelley, \emph{General topology}.\hskip 1em plus 0.5em minus 0.4em\relax
  Courier Dover Publications, 2017.

\bibitem{dal2012introduction}
G.~D. Maso, \emph{An introduction to $\Gamma$-convergence}.\hskip 1em plus
  0.5em minus 0.4em\relax Springer Science \& Business Media, 2012, vol.~8.

\bibitem{heinsnovel}
P.~Heins, ``A novel regularization technique and an asymptotic analysis of
  spatial sparsity priors,'' Ph.D. dissertation, PhD thesis, Westf{\"a}lische
  Wilhelms Universit{\"a}t M{\"u}nster (WWU M{\"u}nster), 2014. 32, 2017.

\bibitem{Schoenberg1973cardinal}
I.~Schoenberg, \emph{Cardinal {S}pline {I}nterpolation}.\hskip 1em plus 0.5em
  minus 0.4em\relax Philadelphia, PA: SIAM, 1973.

\bibitem{de1978practical}
C.~D. Boor, \emph{A Practical Guide to Splines}.\hskip 1em plus 0.5em minus
  0.4em\relax Springer-Verlag New York, 1978, vol.~27.

\bibitem{Unser1999splines}
M.~Unser, ``Splines: A perfect fit for signal and image processing,''
  \emph{IEEE Signal Processing Magazine}, vol.~16, no.~6, pp. 22--38, 1999.

\bibitem{boyd2010distributed}
S.~Boyd, N.~Parikh, E.~Chu, B.~Peleato, and J.~Eckstein, ``Distributed
  optimization and statistical learning via the alternating direction method of
  multipliers,'' \emph{Foundations and Trends{\textregistered} in Machine
  Learning}, vol.~3, no.~1, pp. 1--122, 2010.

\end{thebibliography}
\end{document}